\newcommand{\N}{\mathbb{N}}
\newcommand{\Z}{\mathbb{Z}}
\newtheorem{theorem}{Theorem}[section]
\newtheorem{lemma}[theorem]{Lemma}
\newtheorem{proposition}[theorem]{Proposition}
\newtheorem{claim}[theorem]{Claim}
\theoremstyle{definition}
\newtheorem{definition}[theorem]{Definition}
\newtheorem{question}[theorem]{Question}
\newtheorem{lem}[theorem]{Lemma}
\newtheorem{cor}[theorem]{Corollary}
\newtheorem{defn}[theorem]{Definition}
\author[J. Huang]{Jingyin Huang}
\address{Max Planck Institute for Mathematics
Vivatsgasse 7, 53111 Bonn, Germany}
\email{jingyin@mplm-bonn.mpg.de}
\author[M. Pawliuk]{Michael Pawliuk}
\address{Department of Mathematics and Statistics,
  University of Calgary, 612 Campus Place N.W., 2500
  University Drive NW Calgary, Alberta, Canada T2N 1N4}
\email{mpawliuk@ucalgary.ca}
\author[M. Sabok]{Marcin Sabok}
\address{Department of Mathematics and Statistics, McGill
  University, 805, Sherbrooke Street West Montreal, Quebec,
  Canada H3A 2K6}
\address{Institute of Mathematics, Polish
  Academy of Sciences, \'Sniadeckich 8, 00-655 Warszawa,
  Poland} 
\email{marcin.sabok@mcgill.ca}
\author[D. Wise]{Daniel Wise}
\address{Department of Mathematics and Statistics, McGill
  University, 805, Sherbrooke Street West Montreal, Quebec,
  Canada H3A 2K6}
\email{daniel.wise@mcgill.ca}
\thanks{This research was partially supported by the FRQNT
  (Fonds de recherche du Qu\'{e}bec) grant
  \textit{Nouveaux
    chercheurs} 2018-NC-205427 and
  by the NCN (Polish National Science Centre) through the
  grant \textit{Harmonia} no. 2015/18/M/ST1/00050}
\title[Hypertournaments and profinite topologies]{The Hrushovski
  property for hypertournaments and profinite topologies}
\begin{document}
\maketitle

\begin{abstract}
  We study the problem of extending partial isomorphisms for
  hypertournaments, which are relational structures
  generalizing tournaments. This is a generalized version
  of an old question of Herwig and Lascar. We show that the
  generalized problem has a negative answer,
  and we provide a positive answer in a special
  case. As a corollary, we show that the extension property
  holds for tournaments in case the partial isomorphisms have
  pairwise disjoint ranges and pairwise disjoint domains.
\end{abstract}

\section{Introduction}
\label{sec:introduction}

In \cite{hru} Hrushovski showed the following property for
finite graphs: whenever $G$ is a finite graph and
$\varphi_1,\ldots,\varphi_n$ are partial isomorphisms of
$G$, there exists a finite graph $G'$ containing $G$ as an
induced subgraph such that $\varphi_1,\ldots,\varphi_n$ all
extend to automorphisms of $G'$. This property appears in
the literature under various names (e.g. as the
\textit{EPPA} for \textit{Extending Property for Partial
  Automorphisms} or simply as the \textit{Hrushovski
  property}) and we say that a class $C$ of structures has
the \textit{Hrushovski property} if for any structure $M$ in
$C$ and a finite collection $\varphi_1,\ldots,\varphi_n$ of
partial isomorphisms of $M$ there exists a structure $M'$ in
$C$ which contains $M$ as a substructure and such that all
$\varphi_i$ extend to automorphisms of $M'$.
A general criterion sufficient for the Hrushovski property was given
by Herwig and Lascar \cite{hl} for structures in finite relational
languages and also by Hodkinson and Otto in
\cite{hodkinson.otto}. Recently, both these theorems were generalized
by Siniora and Solecki in \cite{siniora.solecki}. The Hrushovski
property was also studied and extended to other classes of homogeneous
structures, for instance by Solecki to the class of finite metric
spaces \cite{sol.iso} (for other proofs see \cite{vershik,
  pestov.eppa,rosendal.rz,sabok.urysohn,hkn}) or by Evans,
Hubi\v{c}ka, Kone\v{c}n\'{y} and Ne\v{s}et\v{r}il
  \cite{ehn,twographs}. For a detailed discussion of this and
  related problems the reader is advised to consult a recent survey of
  Nguyen Van Th\'e \cite{lionel} on the topic or the ICM survey
  article of Lascar \cite{lascar.icm}.

The Hrushovski property for a Fra\"iss\'e class of finite
structures is useful for the
study of automorphism groups of the corresponding
Fra\"iss\'e limits. A topological group $G$ has
\textit{ample generics} if for each $n$ there exists a dense
$G_\delta$ orbit in the diagonal action of $G$ on $G^n$
(i.e. the action
$g\cdot(g_1,\ldots,g_n)=(gg_1,\ldots,gg_n)$). Ample generics
have strong consequences, such as the automatic continuity
property of the group. Kechris and Rosendal \cite{kr} gave a general
criterion sufficient for ample generics in an automorphism group of a
Fra\"iss\'e structure that involves the Hrushovski property
for the corresponding Fra\"iss\'e class. In particular, they used the original
Hrushovski property for finite graphs in showing that the
automorphism group of the random graph has ample generics,
and consequently the automatic continuity property.

Fra\"iss\'e classes of graphs 
have been classified by Lachlan and Woodrow \cite{lw} and
for directed graphs a complete classification has been given
by Cherlin \cite{cherlin}. The Hrushovski property
for many classes of directed graphs
has been studied in the literature and proved or disproved
for many classes. In fact, the Hrushovski property implies amenability
of the automorphism group of the automorphism group of the appropriate
Fra\"iss\'e structure. The latter does not hold for the
linear tournament, the generic p.o., some weak local orders
(see e.g \cite[page 4]{pawliuk.sokic} for details).

The question for the class of tournaments is well-known and open and
appears in the Herwig and Lascar paper \cite{hl}.

\begin{question}(Herwig and Lascar \cite{hl})\label{conjecture1}
  Does the class of finite tournaments have the Hrushovski property?
\end{question}

This question is related with the problem whether the automorphism
group of the random tournament has ample generics. In fact, as proved
recently by Siniora \cite{siniora}, the two questions are
equivalent. It is worth noting that the automorphism group of the
random tournament has a comeager conjugacy class (see e.g. \cite{doucha.malicki}).

This paper we show that the Hrushovski property does
hold for tournaments in a special case when we make an extra
assumption on the partial automorphisms. In fact, this works
in a more general setting of \textit{hypertournametns} (for
definitions see Section \ref{sec:eppa}).

\begin{theorem}\label{cyclic}
  Suppose $L$ is a nontrivial set of prime numbers and $M$
  is an $L$-hypertournament. If $\varphi_1,\ldots,\varphi_n$
  are partial automorphisms of $M$ with pairwise disjoint
  domains and pairwise disjoint ranges, then there is an
  $L$-hypertournament $M'$ extending $M$ such that all
  $\varphi_i$ extend to automorphisms of $M'$.
\end{theorem}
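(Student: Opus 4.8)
The plan is to exploit the disjointness hypothesis to turn the simultaneous extension problem into a single, well-controlled cyclic construction, and then to use residual finiteness of free groups along the primes in $L$ to make that construction finite while keeping $M$ induced. First I would record the structural consequence of the hypothesis: since the domains are pairwise disjoint and the ranges are pairwise disjoint, the partial map $\varphi=\bigcup_i\varphi_i$ is a well-defined \emph{injective} partial bijection of $M$ (though in general not relation-preserving across different labels). Hence the directed graph on $M$ with an edge $x\to\varphi(x)$ has in- and out-degree at most $1$ at every vertex, so it is a disjoint union of finite directed paths and finite directed cycles. This is the simplification that fails in the general case: the partial automorphisms interact only through this elementary orbit graph, so the group generated by their extensions can be encoded by a free group $F=F(t_1,\ldots,t_n)$ with $t_i\leftrightarrow\varphi_i$, together with a graph of groups whose vertex and edge stabilizers are cyclic.

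Next I would realize $M'$ as a finite quotient of the universal extension on which $F$ acts, so that each $\varphi_i$ lifts to the action of $t_i$; crucially I would \emph{not} try to merge the $\varphi_i$ into one automorphism, but rather extend each to its own automorphism $\Phi_i$ of a common $M'$. To finitize, pass from $F$ to a finite quotient $Q=F/N$: since $L\neq\varnothing$, fix a prime $p\in L$ and use that free groups are residually finite $p$-groups, so $Q$ may be taken to be a finite $L$-group. One then defines the $p$-ary relations on the new points by propagating those of $M$ along the $Q$-orbits, and the $\Phi_i$ become automorphisms exactly when this propagation is consistent. The point of the $L$-group condition is its interaction with the cyclic nature of the relations: whenever a group element setwise stabilizes a $p$-element subset $S$, the permutation it induces on $S$ is a $p$-element of the symmetric group on $p$ letters, hence either trivial or a single $p$-cycle, and both lie in the cyclic group $C_p$ of symmetries under which a cyclic orientation of a $p$-subset is invariant. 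Thus the orientation propagated to each $p$-subset is never forced to equal an incompatible one, and $R_p$ is well defined on $M'$.

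The main obstacle, and the step I would spend most effort on, is choosing $Q$ so that three requirements hold at once: that $M$ sits inside $M'$ as an \emph{induced} substructure (no relations accidentally forced among the original points), that every $p$-subset of $M'$ receives \emph{exactly one} admissible cyclic orientation, and that the cyclic-monodromy condition of the previous paragraph holds \emph{simultaneously for every prime in $L$} — the delicate point being that a stabilizing element must induce a $p$-element on each $p$-subset rather than some other permutation coming from a different prime of $L$. The first two are separation statements in the pro-$L$ topology on $F$: the finitely many bad coincidences that would collapse distinct points of $M$ or create a conflicting orientation correspond to a finite set of nontrivial elements, which must be separated from the identity in a finite $L$-group quotient. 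Here the disjointness pays off again, since these elements are short words coming from the cyclic pieces of the orbit graph, so the required separation follows from residual-$p$-finiteness together with a Ribes--Zalesskii-type closedness of the relevant products of cyclic subgroups; arranging the orbit lengths along each prime to be powers of that prime then reconciles the per-prime monodromy conditions. Assembling these yields the finite $L$-hypertournament $M'$ in which each $\varphi_i$ extends to the automorphism induced by $\Phi_i$.
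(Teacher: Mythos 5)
Your opening observation is exactly the one the paper uses: disjoint domains and disjoint ranges force the orbit graph of the $\varphi_i$ to have in- and out-degree at most $1$ at every vertex, so it is a union of paths and cycles, and hence the fundamental group of each component is cyclic. The paper packages this as ``the immersed graph is a subtadpole'' and then reduces everything to two statements: Proposition \ref{propozycja} (a Herwig--Lascar-type translation between the Hrushovski property for subtadpole families and a separability statement for cyclic subgroups of $F_n$) and Theorem \ref{lem:PI} (a cyclic subgroup closed under $l$-roots for all $l\in L$ is closed in the pro-$L^\perp$ topology). Up to that point your plan is compatible with the paper's.

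The finitization step, however, contains a fatal sign error. You propose to fix a prime $p\in L$ and take the finite quotient $Q$ to be a finite $p$-group (an ``$L$-group''), and you justify this by claiming that an element inducing a $p$-cycle on a $p$-element subset preserves a cyclic orientation of that subset. But the hypertournament axiom in this paper forbids exactly the cyclically invariant configurations: it requires that for no $R$ and $\sigma$ does $R_\sigma$ hold on a tuple together with all of its cyclic shifts. So if $Q$ contains an element of order $l\in L$ acting as an $l$-cycle on an $l$-subset carrying the relation $R_l$, invariance of the structure under $Q$ forces $R_l$ on the whole cyclic orbit of that tuple, which destroys the hypertournament property. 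This is already visible for tournaments ($L=\{2\}$): no involution can act nontrivially on a $2$-element set while preserving the tournament relation, so the extension group must have \emph{odd} order; using finite $2$-groups is precisely the wrong choice. The correct requirement, and the one the paper uses throughout, is that the finite quotient have order divisible only by primes in $L^\perp$ (coprime to every $l\in L$), i.e.\ one works in the pro-$L^\perp$ topology, not the pro-$L$ topology. Relatedly, the separability input you would then need is not ``residual $p$-finiteness for $p\in L$'' plus a Ribes--Zalesskii-type statement, but the genuinely nontrivial fact that a cyclic subgroup closed under $l$-roots for all $l\in L$ is separable by quotients of $L^\perp$-order (Theorem \ref{lem:PI}, proved via maximal cyclic overgroups and residual $p$-ness for $p\in L^\perp$); your proposal does not supply an argument for this, and the closing remark about ``arranging orbit lengths to be prime powers'' does not substitute for it.
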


While the above result does cover the case of tournaments
(when $L=\{2\}$), we show that the assumption on domains and
ranges of the partial automorphisms cannot be dropped
completely.

\begin{theorem}\label{negative}
   It is not true that the Hrushovski property holds for all classes of $L$-hypertournaments.
\end{theorem}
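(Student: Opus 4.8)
The plan is to disprove the Hrushovski property for the class of $\{p\}$-hypertournaments for a single odd prime $p$ (which sidesteps Question \ref{conjecture1}, the open tournament case $p=2$). I would exhibit a finite $p$-hypertournament $M$ together with partial automorphisms $\varphi_1,\ldots,\varphi_n$ which, in view of Theorem \ref{cyclic}, must have overlapping domains and ranges, and which admit no common extension. The guiding principle is a rigidity phenomenon read off from the definitions: the cyclic orientation of a $p$-subset $S$ can be recorded, relative to a chosen ordering of $S$, as a value $\tau(S)\in\Z/p\Z$ in such a way that rotating the ordering by one step changes the recorded value by $1$, while every automorphism preserves $\tau$ once the image set is read through the induced ordering. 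Consequently, if $g\in\aut(M')$ fixes a $p$-subset $S$ setwise and rotates its elements by $k$ steps, then $\tau(S)=\tau(S)+k$, forcing $k\equiv 0\pmod p$. Thus no automorphism can induce a nontrivial cyclic rotation of a $p$-set it preserves, and this is the obstruction I intend to engineer.

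First I would assemble a \emph{conveyor belt} of $p$-subsets $S^{(1)},\ldots,S^{(m)}$ inside a finite $M$, sharing vertices in a prescribed overlapping pattern, and define each $\varphi_j$ to be an isomorphism from $S^{(j)}$ onto $S^{(j+1)}$ (indices cyclic), chosen to preserve every hypertournament relation on its domain so that it is a genuine partial automorphism of $M$. The overlaps are arranged so that any extensions $g_j\supseteq\varphi_j$ must agree on the shared vertices, and the composite $g_m\cdots g_1$ is forced to fix $S^{(1)}$ setwise while inducing a nontrivial rotation of it. By the rigidity above this is impossible in any $p$-hypertournament $M'$, contradicting the existence of common extensions. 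The individual maps are legitimate because each merely identifies two $p$-sets of equal $\tau$-value; it is only the accumulated monodromy around the loop that is nonzero, which is precisely why the disjointness hypothesis of Theorem \ref{cyclic} must be dropped.

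I would phrase the monodromy computation in the language suggested by the title: the loop corresponds to a word $w$ in the free group on $g_1,\ldots,g_n$, the class has the Hrushovski property only if every such $w$ that is forced to act as the identity on $S^{(1)}$ also carries trivial $\Z/p\Z$-holonomy, and the construction produces a $w$ violating this. The main obstacle is the simultaneous demand of (i) \emph{consistency}, namely that all the prescribed local data actually assemble into a single finite $p$-hypertournament $M$ in which each $\varphi_j$ preserves the relations on \emph{every} $p$-subset of its domain, including the subsets that straddle several of the $S^{(j)}$; and (ii) \emph{obstruction}, namely that the holonomy around the loop is provably a nonzero element of $\Z/p\Z$. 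Reconciling these is the crux, and it is exactly here that $p\geq 3$ enters: I expect that for $p=2$ the relevant sign contributions cancel, which is consistent with Question \ref{conjecture1} remaining open. The residual work is the bookkeeping verifying that the $\tau$-values on all mixed $p$-subsets are compatible with each $\varphi_j$ being a partial isomorphism while the total holonomy stays nonzero.
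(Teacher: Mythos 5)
Your rigidity observation is correct and is in fact the engine behind the paper's own constructions: in an $L$-hypertournament no automorphism can induce an $l$-cycle on an $l$-element set it preserves, for $l\in L$ (this is exactly the ``orbits contain no cycle'' computation in the proof of Lemma \ref{rownowaznosc}; note it holds for $l=2$ as well, so the odd/even distinction you invoke is not where the difficulty lies, and the single $\mathbb{Z}/p\mathbb{Z}$-valued invariant $\tau$ does not actually exist canonically for a general $p$-ary relation). The fatal problem is the conveyor belt. If each $\varphi_j$ is a bijection of $S^{(j)}$ onto $S^{(j+1)}$ and is a partial isomorphism of $M$, then the composite $\psi=\varphi_m\circ\cdots\circ\varphi_1$ is everywhere defined on $S^{(1)}$, is itself a partial isomorphism of $M$ carrying $S^{(1)}$ onto $S^{(1)}$, and is completely determined by your data: the extensions $g_j$ contribute nothing, since $g_m\circ\cdots\circ g_1$ restricted to $S^{(1)}$ equals $\psi$. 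If $\psi$ acts as a $p$-cycle on $S^{(1)}$, then iterating $\psi$ on an enumeration of $S^{(1)}$ satisfying $R_\sigma$ already produces the forbidden cycle \emph{inside $M$}, so $M$ is not a $p$-hypertournament; if $\psi$ is not a $p$-cycle, there is no obstruction. Your requirements (i) and (ii) are therefore not merely hard to reconcile --- they are contradictory, and no bookkeeping on the mixed $p$-subsets can repair this. (Making the $\varphi_j$ only partially defined removes the contradiction but also removes the forcing: the extensions then have enough freedom that the loop need not stabilize $S^{(1)}$ at all.) Every ``local monodromy'' obstruction of this kind is automatically excluded by the hypothesis that the given maps are partial isomorphisms; that is precisely why the tournament question is hard.

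The paper's obstruction is global, not local. Via Lemma \ref{rownowaznosc}, failure of EPPA for $L$-hypertournaments is equivalent to the existence of a finitely generated $H\le F_n$ that is closed under $l$-roots for all $l\in L$ yet not closed in the pro-$L^\perp$ topology. The paper takes $L=\{p\}^\perp$ and exhibits $H=\langle aba^{-1}b^{-1}a,\,b\rangle\le F_2$, shown to be malnormal by the fiber-product criterion (Corollary \ref{cor:malnormal criterion}) and hence closed under all roots, but not closed in the pro-$p$ topology for any prime $p$. Non-closedness is proved by contradiction: an extending finite $\{p\}^\perp$-hypertournament $A'$ would yield a tower of regular $\mathbb{Z}/p\mathbb{Z}$-covers down to a regular cover $B$ of the wedge of two circles, along which Lemma \ref{spojne}, Gersten's lemma and Corollary \ref{isomorfizm} force $A\otimes_X B$ to be connected, while Corollary \ref{disconnected} forces it to be disconnected. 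None of this is visible at the level of a single relation orbit, which is the level at which your proposal operates.
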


Herwig and Lascar \cite{hl} connected the question
about tournaments with a problem concerning profinite topologies on the free group. A group $G$
is called \textit{residually finite} if for every
$g\in G$ with $g\not=e$ there exists a finite-index subgroup
$H$ of $G$ such that $g\notin H$. It is well known
that the free groups are residually finite. The
\textit{profinite topology} on a residually finite group is the one
with the basis neighborhood of the identity consisiting of
finite-index subgroups. A subgroup $H$ of  a
residually finite group is \textit{separable} if it is
closed in the profinite topology and a group $G$ is
\textit{LERF} if all its finitely generated subgroups are
separable. Free groups are LERF, by a result of Hall \cite{hall}. 

Herwig and Lascar \cite{hl} found a proof of Hrushovski's theorem
using the fact that free groups are LERF and the
Ribes--Zalesskii theorem saying that products of
f.g. subgroups are closed in the profinite topology on the
free group. In a similar spirit, they showed that Question
\ref{conjecture1} is equivalent to the following Question
\ref{conjecture2}. Here, we
say that a subgroup $H$ of $G$ is \textit{closed under
  square roots} if whenever $g^2\in H$, then $g\in H$ for
any $g\in G$ and the \textit{odd adic} topology is a
refinement of the profinite topology where we take only
finite index normal subgroups of odd index as the basic
neighborhoods of the identity.

\begin{question}(Herwig and Lascar \cite{hl})\label{conjecture2}
  Is it true that for every finitely generated subgroup $H$ of the free
  group $F_n$ the following are equivalent:
  \begin{itemize}
  \item[(i)] $H$ is closed in the odd-adic topology on $F_n$,
  \item[(ii)] $H$ is closed under square roots?  
  \end{itemize}
\end{question}

The implication from (i) to (ii) above
is true and not very difficult, so the actual part of
Conjecture \ref{conjecture2} that is equivalent to
Conjecture \ref{conjecture1} is the implication from (ii) to (i).

The proof of Theorem \ref{cyclic} is based on showing that
the answer to the above question is true for cyclic
subgroups of $F_n$ even in a more general context (for
definitions see Sections \ref{sec:profinite} and
\ref{sec:eppa}).

\begin{theorem}
	\label{lem:PI}
        Let $C< F_n$ be a cyclic subgroup and $L$ be a
        nontrivial set of prime numbers. If $C$ is
        closed under $l$-roots for any $l\in L$, then $C$ is
        closed in the topology generated by the collection
        of pro-$p$ topologies with $p$ ranging over
        $L^\perp$.

\end{theorem}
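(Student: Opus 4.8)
The plan is to combine the structure theory of cyclic subgroups of free groups with a passage to pro-$p$ completions. First I would record that every nontrivial cyclic subgroup of $F_n$ has the form $C=\langle w^d\rangle$, where $w\in F_n$ is not a proper power and $d\geq 1$; here $\langle w\rangle$ is the unique maximal cyclic subgroup containing $C$, and the centralizer $C_{F_n}(w)$ equals $\langle w\rangle$. I would then translate the hypothesis: if some prime $l\in L$ divided $d$, then $w^{d/l}$ would be an $l$-th root of $c=w^d\in C$ lying outside $C$, contradicting closure under $l$-roots. Hence $d$ is coprime to every prime of $L$, so all prime divisors of $d$ lie in $L^\perp$ and $d=\prod_{p\in L^\perp}p^{v_p(d)}$. (Here I use that $L$ is nontrivial, so $L^\perp\neq\emptyset$; the trivial case $C=\{e\}$ is handled separately, being closed because the topology is Hausdorff since $F_n$ is residually-$p$ for some $p\in L^\perp$.)

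Next I would reduce closedness in the generated topology $\tau$ to a statement about the individual pro-$p$ topologies $\tau_p$. Since $\tau$ is finer than each $\tau_p$ with $p\in L^\perp$, closures in $\tau$ are contained in closures in each $\tau_p$, so $C\subseteq\overline{C}^{\,\tau}\subseteq\bigcap_{p\in L^\perp}\overline{C}^{\,\tau_p}$; it therefore suffices to prove $\bigcap_{p\in L^\perp}\overline{C}^{\,\tau_p}=C$. The heart of the matter is then to compute, for a single prime $p$, the pro-$p$ closure of $\langle w^m\rangle$, and I claim it equals $\langle w^{p^{v_p(m)}}\rangle$ — that is, the pro-$p$ topology detects only the $p$-part of the exponent.

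This closure computation is the step I expect to be the main obstacle, and I would carry it out inside the pro-$p$ completion $F_{\widehat p}$. Since $F_n$ is residually-$p$, it embeds densely in $F_{\widehat p}$ with the pro-$p$ topology as subspace topology, so $\overline{\langle w^m\rangle}^{\,\tau_p}=F_n\cap\overline{\langle w^m\rangle}$ with the latter closure taken in $F_{\widehat p}$. The closed procyclic subgroup $\overline{\langle w\rangle}$ is isomorphic to $\Z_p$, with $\langle w\rangle$ sitting inside as $\Z\hookrightarrow\Z_p$; being abelian, it commutes with $w$, so $F_n\cap\overline{\langle w\rangle}$ lies in $C_{F_n}(w)=\langle w\rangle$. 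Writing $m=p^{v_p(m)}u$ with $u$ a unit of $\Z_p$, the closure of $\langle w^m\rangle$ in $\Z_p$ is $p^{v_p(m)}\Z_p$, and intersecting with $\Z=\langle w\rangle$ yields exactly $p^{v_p(m)}\Z=\langle w^{p^{v_p(m)}}\rangle$. The delicate points here are justifying that $\overline{\langle w\rangle}\cong\Z_p$ (using torsion-freeness and residual-$p$-ness to rule out a finite cyclic quotient) and the reduction to the centralizer.

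Finally I would assemble the pieces. By the claim, $\overline{C}^{\,\tau_p}=\langle w^{p^{v_p(d)}}\rangle=p^{v_p(d)}\Z$ inside $\langle w\rangle\cong\Z$, and intersecting over $p\in L^\perp$ gives $\langle w^{N}\rangle$ with $N=\prod_{p\in L^\perp}p^{v_p(d)}=d$, since every prime divisor of $d$ lies in $L^\perp$. Thus $\bigcap_{p\in L^\perp}\overline{C}^{\,\tau_p}=\langle w^d\rangle=C$, which together with the sandwich of the second paragraph forces $\overline{C}^{\,\tau}=C$, so $C$ is closed in $\tau$, as desired.
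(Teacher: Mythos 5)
Your proposal is correct, and its skeleton coincides with the paper's: both pass to the maximal cyclic subgroup $A=\langle w\rangle\supseteq C$ (so $C=\langle w^d\rangle$ with $d=[A:C]$), both use the centralizer characterization $C_{F_n}(w)=\langle w\rangle$ together with residual $p$-ness to handle elements outside $A$ (your identity $F_n\cap\overline{\langle w\rangle}=\langle w\rangle$ is exactly the paper's Lemma~\ref{lem:p}, proved by the same commutator/centralizer trick), and both extract the same number-theoretic fact that closure under $l$-roots forces every prime divisor of $d$ to lie in $L^\perp$. Where you genuinely diverge is the final separation inside $A$: the paper argues element by element, writing $g=a^{ij+m}$ with $1\le m<i$, picking a single prime $p\mid i$ and a finite $p$-group quotient in which $\bar a$ is nontrivial, whereas you compute the exact pro-$p$ closure $\overline{C}^{\,\tau_p}=\langle w^{p^{v_p(d)}}\rangle$ inside the procyclic subgroup $\Z_p\cong\overline{\langle w\rangle}$ of the pro-$p$ completion and then intersect over all $p\in L^\perp$. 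Your version requires importing standard facts about pro-$p$ completions (torsion-freeness of free pro-$p$ groups, the compatibility of closures with the completion), which the paper's elementary finite-quotient argument avoids; in exchange it yields a sharper statement (the precise pro-$p$ closure of any cyclic subgroup) and it cleanly finesses a point the paper's Case~2 treats rather quickly, namely that a quotient in which $\bar a$ is merely nontrivial need not separate $g$ from $C$ when $p$ also divides $m$ --- your valuation bookkeeping, which in effect demands $\bar a$ of order at least $p^{v_p(d)+1}$ before concluding, handles this correctly.
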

 
On the other hand, the negative result in Theorem
\ref{negative} is based on the following result (for
definitions see Section \ref{sec:prelim}).

\begin{theorem}\label{counterexample}
     There exists a malnormal subgroup of $F_2$ which is not
     closed in any pro-$p$ topology.
\end{theorem}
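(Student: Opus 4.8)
The plan is to produce the subgroup explicitly and to separate the two assertions --- malnormality and failure of pro-$p$ closedness --- treating each with the appropriate combinatorial machinery for finitely generated subgroups of free groups. I would present $H \leq F_2 = \langle a,b\rangle$ through its Stallings core graph $\Gamma$ immersing into the rose $R_2$ with one vertex and two labelled oriented edges; fixing $\Gamma$ pins down simultaneously a free basis of $H$ and the set of reduced words it contains. The guiding tension, which shapes the whole argument, is that a malnormal subgroup of a free group is automatically closed under roots: if $g^n \in H$ with $n\neq 0$ and $g\notin H$, then $g^n = g\,g^n\,g^{-1} \in gHg^{-1}\cap H$ is a nontrivial element, contradicting malnormality. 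Hence the failure of pro-$p$ closedness can never be witnessed by a root obstruction; it must come instead from the coarseness of nilpotent quotients. This is what makes the example delicate and also what makes it the right input for Theorem \ref{negative}.

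For malnormality I would invoke the standard fiber-product characterization: $H$ is malnormal precisely when every connected component of the pullback $\Gamma \times_{R_2} \Gamma$ other than the diagonal copy of $\Gamma$ is a tree, i.e. carries no reduced cycle. Since $\Gamma$ is a fixed finite graph, the pullback is a finite graph whose components can be listed, and the verification reduces to checking that none of the off-diagonal components contains a loop. I would carry out this finite inspection for the chosen $\Gamma$; once the pullback is drawn, no element-by-element conjugacy computation is needed.

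The heart of the argument is the failure of pro-$p$ closedness, and here I would use the reformulation that $H$ is closed in the pro-$p$ topology if and only if for every $g\in F_2\setminus H$ there is a homomorphism $q\colon F_2 \to P$ onto a finite $p$-group with $q(g)\notin q(H)$; equivalently, the image of $g$ in the free pro-$p$ group of rank $2$ must avoid the closed subgroup topologically generated by the image of $H$. For each prime $p$ I would exhibit an explicit witness $g_p \in F_2\setminus H$ and prove that no finite $p$-group quotient separates $g_p$ from $H$. The key leverage is that finite $p$-groups are nilpotent, so it suffices to control the image of $H$ in every finite nilpotent quotient of $F_2$, i.e. in the quotients by the terms of the lower central and lower $p$-central series. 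The combinatorics of $\Gamma$ will be arranged so that, modulo any such nilpotent relation, the path spelling $g_p$ is forced into the image of $H$ even though $g_p\notin H$ on the nose. Concretely, I would track the lift of the $g_p$-path into an arbitrary finite regular cover $\widehat R \to R_2$ with $p$-group deck group into which the immersion $\Gamma\to R_2$ lifts to an embedding, and show that nilpotency prevents this lift from escaping the lifted copy of $\Gamma$.

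The main obstacle I anticipate is exactly this last step made uniform in $p$: one must show that for no prime $p$ and no finite $p$-quotient does separation occur, which amounts to a structural statement about the images of $H$ across all nilpotent quotients at once. Unlike the profinite case, where the residual finiteness of free groups and Hall's theorem give separation for free, here the nilpotency of $p$-groups is a genuine restriction, and quantifying over every $p$ simultaneously --- rather than exploiting one convenient quotient --- is where the real work lies. Once this is established, $H$ is malnormal yet fails to be closed in every pro-$p$ topology, which is precisely the input needed to defeat the Hrushovski property in Theorem \ref{negative}.
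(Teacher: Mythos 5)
Your strategic framing is sound: the observation that malnormality forces closure under roots (the paper's Claim \ref{malnormalclaim}), so that failure of pro-$p$ closedness must come from the coarseness of nilpotent quotients rather than from a root obstruction, is exactly the right way to see the problem, and your plan for malnormality (check that every non-diagonal component of the fiber product $\Gamma\otimes_{R_2}\Gamma$ is a tree) is precisely what the paper does via Corollary \ref{cor:malnormal criterion}. But there is a genuine gap at the heart of the argument: you never exhibit the subgroup, and you never identify the mechanism that defeats every finite $p$-group quotient for every $p$ simultaneously. You correctly flag this step as ``where the real work lies,'' but flagging it is not doing it, and the theorem is an existence statement whose entire content is the construction of a witness with both properties at once.

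The missing idea is homological. The paper takes $G=\langle aba^{-1}b^{-1}a,\ b\rangle$, realized as $\pi_1(A)$ for an explicit five-vertex graph $A$ immersing in the rose $X$. The point of this choice is that the generators have exponent sums $(1,0)$ and $(0,1)$ in $(a,b)$, so the induced map $H_1(A,\Z/p\Z)\to H_1(X,\Z/p\Z)$ is an isomorphism for \emph{every} prime $p$. This single property kills separation in all $p$-group quotients at once: a subgroup of $F_2$ that surjects onto $H_1(F_2,\Z/p\Z)$ surjects onto every finite $p$-group quotient (a Frattini-type argument), hence $G$ is dense in every pro-$p$ topology, and being proper it cannot be closed. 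The paper implements this via Gersten's lemma and an induction showing $A\otimes_X X_i$ stays connected along a tower of $\Z/p\Z$-covers, then contradicts the disconnectedness of $A\otimes_X A'$ supplied by Corollary \ref{disconnected}; but the mod-$p$ homology isomorphism is the engine either way. Without this --- or some equivalent uniform control over all nilpotent quotients --- your outline cannot be completed; and once you do adopt it, the remaining nontrivial task is to arrange that a subgroup with surjective mod-$p$ homology is simultaneously malnormal, which is exactly what the explicit graph $A$ and the finite fiber-product check in Figure \ref{f:2} accomplish.
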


\section{Preliminaries}\label{sec:prelim}
A \emph{graph} is a $1$-dimensional cell (CW) complex, in
which vertices are $0$-cells (points) and edges are the
$1$-cells (intervals) glued to the $0$-skeleton by their end
points. Loops and multiple edges allowed. A \emph{morphism}
of graphs is a cellular map that sends each open edge
($1$-cell without endpoints) homeomorphically onto an open
edge. A graph morphism $f:X\to X'$ is an \emph{immersion} if
it is locally injective (i.e. each point has a neighborhood
on which $f$ is injective). When $X$ and $X'$ are connected,
an immersion between them induces injective maps between
their fundamental groups.

Note that if we label the circles in the wedge of $n$ many
circles with letters $a_1,\ldots,a_n$, then we can pull back
this labelling 
to a labelling of the graph immersed to the wedge of $n$ circles. Here we use
the convention that if an edge is labelled with $a_i$, then
the reverse edge is labelled with $a_i^{-1}$. By an
\textit{immersed graph with $n$ letters} we mean a directed
graph with edges labelled with one of the $n$ letters, say
$a_1,\ldots,a_n$ such that for each vertex and $i\leq n$
there is at most one incoming edge labelled with $a_i$ and
at most one outgoing edge labelled with $a_i$. Note that
immersed graphs with $n$ letters on a vertex set $X$
correspond to the sets of $n$-many partial bijections of
$X$. Note also that an immersed graph with $n$ letters is a cover of the
bouquet of $n$ circles if and only if the partial bijections
are total. Given an immersed graph on $X$ with $n$ letters
and $x_0\in X$, the fundamental group
$\pi_1(X,x_0)$ is the subgroup of
$F_n=\langle a_1,\ldots,a_n\rangle$ consisting of those
words on $a_1,\ldots,a_n$ which form loops at $x_0$.

The following standard theorem (which follows from the work of
Stallings) shows that any finitely generated subgroup of
$F_n$ is of the above form.

\begin{theorem}
	\label{thm:stallings}
        Write $X_n$ for the wedge of $n$-circles with its
        based vertex $x_n$. For any finitely generated
        subgroup $H\le F_n$, there is a based finite graph
        $(X,x)$ and an immersion
        $$f:(X,x)\to (X_n,x_n)$$ such that $\pi_1(X,x)\cong H$
        and $f_{\ast}:\pi_1(X,x)\to \pi_1(X',x')$ is exactly
        the inclusion $H\leq F_n$.
\end{theorem}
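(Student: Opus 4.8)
The plan is to prove this by Stallings' folding procedure: build an obvious (possibly non-immersed) label-preserving graph that realizes $H$, then repeatedly \emph{fold} it until the morphism to $X_n$ becomes an immersion, while checking at each stage that connectedness, the basepoint, and — crucially — the image subgroup of $F_n$ are preserved. First I would fix a finite generating set $w_1,\ldots,w_k$ of $H$, with each $w_j$ a reduced word in $a_1^{\pm 1},\ldots,a_n^{\pm 1}$, and form an initial based graph $(X_0,x_0)$ as a wedge of $k$ loops at $x_0$, the $j$-th loop being subdivided into $|w_j|$ oriented edges whose labels spell out $w_j$. Reading off labels gives a label-preserving graph morphism $f_0\colon (X_0,x_0)\to (X_n,x_n)$; since $\pi_1(X_0,x_0)$ is free on the $k$ loop classes, which map to $w_1,\ldots,w_k$, the image $(f_0)_\ast\bigl(\pi_1(X_0,x_0)\bigr)$ is exactly $H$. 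This $X_0$ need not be immersed: a vertex may carry two distinct edges with the same label, both outgoing or both incoming.

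Next I would describe the elementary fold. Whenever a vertex $v$ has two distinct edges $e_1,e_2$ with the same label $a_i$ and both outgoing (the incoming case is symmetric after reversing orientations), I identify $e_1$ with $e_2$, together with their terminal vertices if these differ. This produces a quotient morphism $p\colon X\to X'$ and a label-preserving morphism $f'\colon X'\to X_n$ with $f=f'\circ p$. Each fold strictly decreases the number of edges, so only finitely many are possible; when none applies, the resulting morphism is, by the definition recorded in the preliminaries, an immersion. Connectedness and the basepoint are clearly preserved throughout.

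The point I expect to be the main obstacle is that folding preserves the image subgroup $f_\ast(\pi_1)\le F_n$. I would argue that every fold $p$ is surjective on $\pi_1$. If the two terminal vertices of $e_1,e_2$ are distinct, the fold is a homotopy equivalence (a deformation retraction of the spur onto a single edge), so $p_\ast$ is an isomorphism. If the two terminal vertices already coincide, then $e_1$ and $e_2$ are parallel same-label edges and the loop $e_1\overline{e_2}$ reads the word $a_i a_i^{-1}$, hence already lies in the kernel of the map to $F_n$; the fold merely collapses this null-mapping loop and so induces a $\pi_1$-surjection. In either case $p_\ast$ is onto, and from $f=f'\circ p$ we obtain $f_\ast\bigl(\pi_1(X,x_0)\bigr)=f'_\ast\bigl(\pi_1(X',x_0')\bigr)$. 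Thus the image in $F_n$ is unchanged by each fold and remains equal to $H$.

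Finally, writing $(X,x)$ for the graph obtained after all folds, the morphism $f\colon (X,x)\to (X_n,x_n)$ is an immersion between connected graphs, so by the fact recorded in the preliminaries $f_\ast$ is injective; its image is $H$, so $f_\ast$ is an isomorphism of $\pi_1(X,x)$ onto $H\le F_n$ realizing exactly the inclusion, as required. If one additionally wants the graph to have no valence-one vertices away from the basepoint, I would prune the hanging trees, which is a deformation retraction and so alters none of the above conclusions.
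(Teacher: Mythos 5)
Your proof is correct and is precisely the standard Stallings folding argument, which is exactly what the paper invokes when it states this result without proof as ``a standard theorem (which follows from the work of Stallings).'' The one informal point --- that a fold whose two terminal vertices are distinct is a homotopy equivalence --- is true and standard (the fold preserves Euler characteristic and is $\pi_1$-surjective, and finitely generated free groups are Hopfian), so nothing essential is missing.
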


Given a graph $X$ and a ring $R$, by its \textit{first chain
  group} $C_1(X,R)$ we mean the cellular chain group, where
each edge gives a generator in the cellular chain group. We
will always view the first homology group $H_1(X,R)$ as a subgroup of
the first chain group $C_1(X,R)$ (for details see e.g. \cite[Chapter 2.2.]{hatcher}).

Given a goup $G$ and a space $X$, we say that a covering
space $\hat{X}\to X$ is a \textit{$G$-cover} if it is a
regular cover and the deck transformation group is
isomorphic to $G$.

We will also need the following Nielsen-Schreier formula.

\begin{theorem}[\cite{may}]
  \label{thm:rank}
  Let $H$ be a subgroup of index $k$ inside a free group on
  $n$ generators. Then $H$ is a free group of rank
  $1+k(n-1)$.
\end{theorem}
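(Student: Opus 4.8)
The plan is to translate the statement into the topology of graphs, using the standard dictionary between subgroups of $F_n$ and covering spaces of the wedge $X_n$ of $n$ circles. Identifying $F_n$ with $\pi_1(X_n,x_n)$, a subgroup $H\le F_n$ of index $k$ corresponds to a connected covering space $p\colon\hat X\to X_n$ of degree $k$ for which $p_\ast\colon\pi_1(\hat X,\hat x)\to\pi_1(X_n,x_n)$ is injective with image exactly the inclusion $H\le F_n$. In the language already introduced, this $\hat X$ is precisely the finite immersed graph of Theorem~\ref{thm:stallings}, and since $H$ has finite index $k$ the immersion is in fact a $k$-sheeted cover. In particular $\hat X$ is a $1$-dimensional CW complex, so its fundamental group is free; this already yields that $H$ is free, which is the qualitative half of the statement.

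Next I would carry out the Euler-characteristic bookkeeping. Since $X_n$ has a single vertex and $n$ edges, and $p$ is a $k$-fold cover, each cell of $X_n$ has exactly $k$ preimages: the fiber over $x_n$ gives $k$ vertices of $\hat X$, and each of the $n$ edges lifts to $k$ edges, so $\hat X$ has $k$ vertices and $kn$ edges. The crux is the elementary fact that a finite connected graph has free fundamental group of rank $E-V+1$, where $V$ and $E$ are the numbers of vertices and edges. I would establish this by choosing a spanning tree $T\subseteq\hat X$: collapsing $T$ is a homotopy equivalence since $T$ is contractible, and it sends $\hat X$ onto a wedge of circles, one for each of the $E-(V-1)$ edges not in $T$. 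Hence $\pi_1(\hat X,\hat x)$ is free of rank $E-V+1=1-\chi(\hat X)$.

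Combining the two steps gives the rank of $H\cong\pi_1(\hat X,\hat x)$ as
$$
kn-k+1=1+k(n-1),
$$
as claimed. Equivalently one may phrase the computation through multiplicativity of the Euler characteristic under finite covers, $\chi(\hat X)=k\cdot\chi(X_n)=k(1-n)$, giving rank $=1-\chi(\hat X)=1+k(n-1)$. The only points needing genuine care are the spanning-tree argument and this multiplicativity of $\chi$; everything else is direct cell-counting. Because these are the standard structural facts about graphs and their covers, I do not expect a serious obstacle here: the entire content of the result lies in setting up the covering-space correspondence correctly and then counting cells, and the finite-index hypothesis is exactly what guarantees that the relevant cover is finite so that the counts $V=k$ and $E=kn$ make sense.
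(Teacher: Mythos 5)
Your proof is correct, and it is the standard covering-space argument: the paper does not prove Theorem~\ref{thm:rank} at all but simply cites it to the literature, and the argument in that reference is exactly the one you give (finite-index subgroups correspond to finite-sheeted covers of the wedge of $n$ circles, a finite connected graph deformation-retracts modulo a spanning tree to a wedge of $E-V+1$ circles, and the cell count $V=k$, $E=kn$ yields rank $1+k(n-1)$). Your steps, including the spanning-tree collapse and the Euler-characteristic bookkeeping, are all sound, so there is nothing to correct.
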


\begin{defn}
  A subgroup $H$ of $G$ is \emph{malnormal} if for any
  $g\in G\setminus H$, we have
  $H\cap gHg^{-1}=\{\mathrm{1}\}$.
\end{defn}

\begin{definition}
Given a positive integer $l$, we say that a subgroup $H$ of $G$ is \textit{closed under
  $l$-roots} if for every $g\in G$  whenever $g^l\in H$, then $g\in H$.  
\end{definition}

If $l=2$, then we refer to the above by saying that $G$ is \textit{closed
under square roots}.

\begin{claim}\label{malnormalclaim}
 If $G$ is torsion-free and $H\leq G$ is malnormal, then $H$ is closed under
 $l$-roots for all $l$.
\end{claim}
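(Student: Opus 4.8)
The plan is to argue by contraposition, feeding the element $g$ itself into the defining property of malnormality. Fix a positive integer $l$ and suppose $g\in G$ satisfies $g^l\in H$; the goal is to conclude $g\in H$. Assume toward a contradiction that $g\notin H$. Then the malnormality hypothesis applies directly to $g$ and yields $H\cap gHg^{-1}=\{1\}$.

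The key observation is to exhibit a single element forced into this trivial intersection. On one hand $g^l\in H$ by hypothesis. On the other hand, $g$ commutes with its own power, so $g\,g^l\,g^{-1}=g^l$, which shows $g^l\in gHg^{-1}$ as well. Hence $g^l\in H\cap gHg^{-1}=\{1\}$, forcing $g^l=1$.

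Finally I would invoke the torsion-free hypothesis: since $l\geq 1$ and $g^l=1$, torsion-freeness gives $g=1$, and in particular $g\in H$. This contradicts the assumption $g\notin H$, so the contraposition is complete and $g\in H$ holds whenever $g^l\in H$.

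The argument is short and I do not expect a genuine obstacle. The only real content is recognizing that $g^l$ is the correct witness element: it lies in $H$ by hypothesis and in $gHg^{-1}$ precisely because $g$ centralizes it, so that malnormality collapses it to the identity. The torsion-free hypothesis is then used exactly once, to promote $g^l=1$ to $g=1$ and thereby rule out the only remaining escape, namely a nontrivial $g$ of order dividing $l$.
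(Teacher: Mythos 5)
Your proof is correct and is essentially identical to the paper's: both feed the witness $g^l\in H\cap gHg^{-1}$ (using that $g$ centralizes $g^l$) into the malnormality hypothesis and then invoke torsion-freeness to rule out $g^l=1$ for nontrivial $g$. The only cosmetic difference is that the paper phrases the contradiction as the intersection containing the infinite cyclic group $\langle g^l\rangle$, while you first deduce $g^l=1$ and then $g=1$; the content is the same.
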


\begin{proof}
  Suppose $g^l\in H$ but $g\notin H$. Then
  $H\cap g H g^{-1}=\{1\}$ by malnormality. However,
  $H\cap g H g^{-1}=\{1\}$ contains the infinite cyclic
  group generated by $g^l$, which is a contradiction.
\end{proof}

\section{Profinite topologies}\label{sec:profinite}

We consider several profinite topologies on the free group.

\begin{definition}
Given a set $P$ of prime numbers consider the topology on
the free group generated by cosets of those normal subgroups whose index is finite and
divisible only by prime numbers in $P$. We refer to this
topology as to the
\textit{pro-$P$} topology.   
\end{definition}

Note that the neighborhoods of the identity in the
pro-$P$ topology are those normal subgroups $H$ of
finite index such that all orders of elements of $F_n\slash H$ are
divisible only by numbers in $P$.

In the case $P$ consists of
a single prime number $p$, we get the pro-$p$ topology on the free group. Another
special case when $P$ consists of all odd primes was considered by Herwig and
Lascar \cite{hl} who considered the above
topology under the name of the
\textit{odd-adic} topology. For more on the pro-$P$
topologies, the reader can consult \cite{profinite.groups}.

Recall that a group $G$ is \emph{residually $p$} if the
trivial subgroup is closed in the pro-$p$ topology of $G$.
The free group $F_n$ is residually $p$ for any prime $p$
(for a short proof see e.g. \cite[Lemma 2.23]{koberda}).

\begin{definition}
Given a set $L$ of positive integers write 
$$L^\perp=\{p\ \mathrm{prime}\mid (p,l)=1\ \mathrm{for\
  any}\ l\in L\}.$$
We say that $L$ is a \textit{nontrivial} set of positive
integers if $L^\perp\not=\emptyset$.  
\end{definition}

\begin{claim}
If $H\leq F_n$ is closed in the pro-$L^\perp$ topology, then
$H$ is closed under $l$-roots for all $l\in L$.  
\end{claim}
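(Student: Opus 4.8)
The plan is to prove the contrapositive: assuming $H$ is not closed under $l$-roots for some $l\in L$, I will exhibit an element in the closure of $H$ that does not lie in $H$, so that $H$ fails to be closed in the pro-$L^\perp$ topology. Concretely, fix $l\in L$ and $g\in F_n$ with $g^l\in H$ but $g\notin H$. I first record the standard description of the closure in this topology: since the basic open sets are cosets of the allowed normal subgroups, the closure of $H$ equals $\bigcap_N HN$, where $N$ ranges over the normal subgroups of $F_n$ of finite index divisible only by primes in $L^\perp$. Thus it suffices to show that $g\in HN$ for every such $N$.

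The key arithmetic step is to exploit the coprimality built into the definition of $L^\perp$. Fix an allowed $N$ and pass to the finite quotient $Q=F_n/N$. By hypothesis $|Q|$ is divisible only by primes in $L^\perp$, each of which is coprime to $l$; hence $\gcd(|Q|,l)=1$. Writing $\bar g$ for the image of $g$ in $Q$, the order $d$ of $\bar g$ divides $|Q|$ and so is also coprime to $l$. Choosing $m$ with $lm\equiv 1\pmod d$ gives $\bar g=(\bar g^l)^m$. This is the heart of the argument: coprimality lets me invert the $l$-th power map on the cyclic subgroup $\langle\bar g\rangle$.

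Finally I push this back to $F_n$. Since $g^l\in H$, its image $\bar g^l=\overline{g^l}$ lies in the subgroup $HN/N\le Q$ (a subgroup because $N$ is normal). Raising to the $m$-th power, $\bar g=(\bar g^l)^m\in HN/N$, which says exactly that $g\in HN$. As $N$ was an arbitrary allowed normal subgroup, $g$ lies in $\bigcap_N HN$, the closure of $H$; since $g\notin H$, the subgroup $H$ is not closed. I do not expect a serious obstacle here — once the closure is identified with $\bigcap_N HN$, everything reduces to the elementary observation that $x\mapsto x^l$ is invertible on a finite cyclic group of order coprime to $l$. The one point that requires care is confirming that every allowed quotient order is genuinely coprime to $l$, which is immediate from the definition of $L^\perp$.
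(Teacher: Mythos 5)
Your proof is correct and is essentially the paper's argument spelled out in full: the paper's one-line proof observes that in a finite quotient whose order is coprime to $l$ the subgroups generated by $\bar g$ and $\bar g^{\,l}$ coincide, which is precisely your step of inverting the $l$-th power map modulo the order of $\bar g$. The identification of the closure with $\bigcap_N HN$ and the coprimality bookkeeping are the right (and only) details to check, and you have them.
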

\begin{proof}
  Indeed, if $g^l\in H$ but $g\notin H$, then $g$ cannot be
  separated from $H$ in a finite quotient of rank relatively
  prime with $l$ because for an element $\bar{g}$ of such a
  group the subgroups generated by $\bar{g}^l$ and $\bar{g}$
  are the same.
\end{proof}

\section{Fiber products of graphs}


\begin{defn}
Let $A\to X$ and $B\to X$ be maps between sets. Their \emph{fiber-product} $A\otimes_X B$ is the collection of points $(a,b)$ in $A\times B$ such that $a$ and $b$ are mapped to the same point in $X$. In the case when $A\to X$ and $B\to X$ are graph morphisms, $A\otimes_X B$ has a natural graph structure, whose vertices (resp. edges) are pairs of vertices (resp. edges) in $A,B$ that map to the same vertex (resp. edge) in $X$.  There is a commutative diagram whose arrows are graph morphisms:
	$$\begin{matrix}
	A\otimes_X B & \rightarrow & B \\
	\downarrow & & \downarrow \\
	A & \rightarrow & X \\
	\end{matrix}$$
\end{defn}

In general $A\otimes_X B$ may not be connected even if all
of $A$, $B$ and $X$ are connected (cf. Figure \ref{fig:nondiag}). If $B\to X$ is a covering
map of degree $n$, then $A\otimes_X B\to A$ is also a
covering map of the same degree. In this case it is also
called the \textit{pull-back} of the covering. The pull-back
behaves in a covariant way, i.e. for two consequtive
coverings, the pull-back of the composition is canonically
homeomorphic to the pull-back
by the second map of the pull-back by by first map (see
e.g. \cite[page 49]{steenrod}).

Note that there is a
canonical embedding from $A$ to $A\otimes_X A$, whose image
is called the \emph{diagonal component} of $A\otimes_X
A$.
Other components of $A\otimes_X A$ (if they exist) are called
\emph{non-diagonal}.

The following lemma is standard, we include a proof for the convenience of the reader.

\begin{lem}
	\label{lem:components of tensor product}
        Suppose $h_A:A\to X$ and $h_B:B\to X$ are
        immersions. Choose base points $a\in A$, $b\in B$
        such that they are mapped to the same base point
        $x\in X$.
\begin{enumerate}
\item Let $Z$ be the connected component of $A\otimes_X B$
  that contains $z=(a,b)\in A\otimes_X
  B$.
  Then $$\pi_1(Z,z)=\pi_1(A,a)\cap \pi_1(B,b)$$ (we view
  $\pi_1(Z,z),\pi_1(A,a)$ and $\pi_1(B,b)$ as subgroups of
  $\pi_1(X,x)$.)
	\item for any $g\in\pi_1(X,x)$ such that
          $$\pi_1(A,a)\cap g\pi_1(B,b)g^{-1}\not=\{1\}$$
          there is a component $C$ of
          $A\otimes_X B$ such that
          $$\pi_1(C)=\pi_1(A,a)\cap g\pi_1(B,b)g^{-1}$$ up to choices
          of base points.
\end{enumerate}
\end{lem}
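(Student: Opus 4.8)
The plan is to deduce both parts from the unique path-lifting property of immersions, proving (1) directly and reducing (2) to (1) by a change of base point.

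For part (1), I would first observe that the two projections $A\otimes_X B\to A$ and $A\otimes_X B\to B$, as well as the composite $A\otimes_X B\to X$, are all immersions, so that $\pi_1(Z,z)$ injects into $\pi_1(X,x)$ and consists precisely of those reduced words $w\in\pi_1(X,x)$ that lift to loops at $z=(a,b)$. The heart of the matter is the description of a path in the fiber product: by definition of $A\otimes_X B$, a lift of a path $\gamma$ in $X$ starting at $z$ is exactly a pair consisting of a lift of $\gamma$ in $A$ starting at $a$ and a lift of $\gamma$ in $B$ starting at $b$, and such lifts are unique because the maps are immersions. Hence $\gamma$ lifts to a loop at $z$ if and only if it lifts to a loop at $a$ in $A$ and to a loop at $b$ in $B$, that is, if and only if $w\in\pi_1(A,a)$ and $w\in\pi_1(B,b)$. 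This yields $\pi_1(Z,z)=\pi_1(A,a)\cap\pi_1(B,b)$.

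For part (2), I would reduce to part (1). Since $\pi_1(C)$ is determined only up to conjugacy (there is no preferred base point of $C$), and since $g\pi_1(B,b)g^{-1}$ depends only on the coset $g\pi_1(B,b)$, it suffices to produce a vertex $b'\in B$ lying over $x$ such that $h_{B\ast}\pi_1(B,b')=g\pi_1(B,b)g^{-1}$; applying part (1) to the base points $a$ and $b'$ then yields a component $C$ with $\pi_1(C)=\pi_1(A,a)\cap g\pi_1(B,b)g^{-1}$. The existence of such a $b'$ is exactly a change-of-base-point statement: a path in $B$ from $b$ to $b'$ whose image in $X$ represents $g$ conjugates $\pi_1(B,b)$ to the desired subgroup.

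The main obstacle is therefore to locate this vertex $b'$ \emph{inside} $B$, since $B$ is only immersed (not a cover) and $g$ need not be realized by any path issuing from $b$. Here I would use the nontriviality hypothesis. Pass to the cover $p:X_B\to X$ corresponding to $\pi_1(B,b)$; the based immersion $B\hookrightarrow X_B$ lifting $h_B$ is injective with complement a forest, so $B$ contains the \emph{core} of $X_B$, the maximal subgraph all of whose vertices have valence at least two. Let $v$ be the vertex of the fiber $p^{-1}(x)$ corresponding to the coset of $g$; a short computation with the monodromy action shows that a reduced loop at $v$ represents an element of $g\pi_1(B,b)g^{-1}$. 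Choosing a nontrivial $w\in\pi_1(A,a)\cap g\pi_1(B,b)g^{-1}$, the element $w$ lifts to a nontrivial reduced loop at $v$; since such a loop cannot traverse a valence-one vertex, it lies in the core, and hence $v$ lies in $B$. This produces the required $b'$, and part (2) follows.
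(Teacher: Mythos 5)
Your part~(1) is correct and is essentially the paper's argument in different clothing: the paper verifies that the reduced representative of an element of $\pi_1(A,a)\cap\pi_1(B,b)$ lifts to both $A$ and $B$ by passing to the universal cover (a geodesic with endpoints in the subtree $\widetilde A$ stays in $\widetilde A$), while you invoke the equivalent standard fact that for an immersion of graphs the image of $\pi_1$ consists exactly of the classes whose reduced representatives lift; combined with the observation that a lift to $A\otimes_X B$ is the same as a simultaneous lift to $A$ and to $B$, this gives~(1).

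Part~(2), however, has a genuine gap. Your strategy is to keep $a$ and $x$ fixed and find a vertex $b'\in B$ over $x$ with $h_{B*}\pi_1(B,b')=g\pi_1(B,b)g^{-1}$; such a $b'$ need not exist. Take $X$ the wedge of two circles labelled $a,b$; let $B$ be a single vertex $b_0$ with one $b$-loop, so $\pi_1(B,b_0)=\langle b\rangle$; let $A$ be an $a$-edge from $a_0$ to $a_1$ with a $b$-loop at $a_1$, based at $a_0$, so $\pi_1(A,a_0)=\langle aba^{-1}\rangle$; and let $g=a$. Then $\pi_1(A,a_0)\cap g\pi_1(B,b_0)g^{-1}=\langle aba^{-1}\rangle\neq\{1\}$, yet $B$ has only the vertex $b_0$ and $\pi_1(B,b_0)=\langle b\rangle\neq\langle aba^{-1}\rangle$. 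The specific step that fails is the claim that the vertex $v$ of the cover $X_B$ corresponding to the coset of $g$ must lie in the embedded copy of $B$ because a nontrivial reduced loop based at $v$ exists: first, $X_B$ is a cover of $X$, so every vertex of $X_B$ has the same valence as its image and the core is not detected by valences in $X_B$; second, even using the correct structure ($X_B\setminus B$ is a forest of trees each attached to $B$ at one vertex), a nontrivial reduced loop at $v$ can perfectly well have the form $\delta\beta\delta^{-1}$ with $\delta$ the reduced path from $v$ through a hanging tree into $B$ — in the example above this is exactly what happens, and $v\notin B$. The conclusion of~(2) is still true, but only because of the caveat ``up to choices of base points'': one must be allowed to move the base point of $A$ (and the point of $X$ underneath) as well. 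This is what the paper does: it picks a nontrivial $h\in\pi_1(A,a)\cap g\pi_1(B,b)g^{-1}$, notes that its axis in $\widetilde X$ lies in $\widetilde A\cap g\widetilde B$, projects a vertex of that intersection to a pair $(a',b')$ with a common image $x'\in X$ (not necessarily $x$), and identifies the universal cover of the resulting component with $\widetilde A\cap g\widetilde B$. In the example, the relevant component is the one containing $(a_1,b_0)$, which realizes $\langle b\rangle=a^{-1}\langle aba^{-1}\rangle a$, a conjugate of the stated intersection.
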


\begin{proof}
  The commutativity of the diagram implies
  $\pi_1(Z,z)\subseteq\pi_1(A,a)\cap \pi_1(B,b)$. To see the
  other inclusion, let $g\in \pi_1(A,a)\cap \pi_1(B,b)$ and
  let $h:\omega\to X$ be a graph morphism from a (possibly
  subdivided) circle to $X$ representing the shortest edge
  loop in $X$ based at $x$ corresponding to $g$. We claim
  $h$ lifts to an edge loop based at $a$. To see the claim,
  let $$(\widetilde X,\tilde x)\to (X,x)$$ be the universal
  cover of $X$ with a lift $\tilde x$ of $x$. Let
  $\tilde h_A:\widetilde A\to \widetilde X$ be a base pointed
  lift of $h_A$. Since $\widetilde X$ and $\widetilde A$ are
  simply connected and $\tilde h_A$ is an immersion, $h_A$
  is an embedding and we view $\widetilde A$ as a subspace
  of $\widetilde X$. Note that $h$ lifts to an shortest edge
  path $\widetilde{\omega}\subseteq \widetilde X$ whose two
  endpoints are in $\widetilde A$. Hence
  $\widetilde{\omega}\subseteq \widetilde A$ as $\widetilde A$
  is a subtree. Now the claim follows. Similarly, we can
  also lift $h$ to an edge loop in $B$ based at $b$. This
  defines an edge loop in $A\otimes_X B$ based at $z$. Thus
  (1) follows.

  Now we prove (2). We define
  $\tilde h_B:\widetilde B\to \widetilde X$ in a way similar
  to the previous paragraph. Note that $\pi_1(A,a)$
  stabilizes $\widetilde{A}$ and $g\pi_1(B,b)g^{-1}$
  stabilizes $g\widetilde{B}$. Let
  $h\in\pi_1(A,a)\cap g\pi_1(B,b)g^{-1}$ be a non-trivial
  element. Then $h$ stabilizes a unique line
  $\ell\subset \widetilde{X}$ and acts on $\ell$ by
  translation. The uniqueness of $\ell$ implies that
  $\ell\subset \widetilde A$ and
  $\ell\subset g\widetilde B$. Thus
  $\widetilde A\cap g\widetilde B$ is non-empty and is
  stabilized by $\pi_1(A,a)\cap g\pi_1(B,b)g^{-1}$. Let
  $v\in \widetilde A\cap g\widetilde B$ be a vertex. Then
  $v$ gives rise to a pair of vertices $a'\in A$ and
  $b'\in B$ via $\widetilde A\to A$ and
  $g\widetilde B\to\widetilde B\to B$ such that $a'$ and
  $b'$ are mapped to the same vertex in $X$. Let $K$ be the
  component of $A\otimes_X B$ containing $(a',b')$. Then any
  edge path of $K$ can be lifted to an edge path inside
  $\widetilde A\cap g\widetilde B$. Thus the universal cover
  of $K$ can be identified with
  $ \widetilde A\cap g\widetilde B$ and $\pi_1 K$ can be
  identified with $\pi_1(A,a)\cap g\pi_1(B,b)g^{-1}$ up to
  a change of base points.
\end{proof}

The following is an immediate consequence of
Lemma~\ref{lem:components of tensor product}.
\begin{cor}
  \label{cor:malnormal criterion}
  Let $A\to X$ be an immersion between connected
  graphs. Then $\pi_1 A$ is malnormal in $\pi_1 X$ if and only
  if each non-diagonal component of $A\otimes_X A$ is a
  tree (i.e. simply-connected)
\end{cor}

Note that the statement of the corollary does not depend on
choices of base points in $A$ and $X$, so we omit the base
points in the statements.

Now, we record another application of the fiber product, which
together with Theorem~\ref{thm:stallings} give an algorithm
to detect whether a finitely generated subgroup of a
finitely generated free group is square free or not. 

\begin{cor}
  Let $H\le F_n$ be a finitely generated group, $X_n$ the
  wedge of $n$ circles and $f:X\to X_n$ a graph immersion
  such that $f_*(\pi_1(X))=H$. Then $H$ is square free in
  $F_n$ if and only if for any two different vertices
  $x_1,x_2\in X$, $(x_1,x_2)$ and $(x_2,x_1)$ are in
  different connected components of $X\otimes_{X_n} X$.
\end{cor}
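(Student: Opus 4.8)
The plan is to set $H=\pi_1(X,x_0)$ for a base vertex $x_0$ (via the immersion $f$, as in Theorem~\ref{thm:stallings}) and to prove the contrapositive equivalence: there exist distinct vertices $x_1,x_2\in X$ for which $(x_1,x_2)$ and $(x_2,x_1)$ lie in a common component of $X\otimes_{X_n}X$ if and only if $H$ fails to be closed under square roots. First I would record that closure under square roots is a conjugacy invariant: if $(\gamma^{-1}t\gamma)^2=\gamma^{-1}t^2\gamma\in H$ then $\gamma^{-1}t\gamma\in H$, so the property holds for $H$ if and only if it holds for any of the conjugate vertex subgroups $\pi_1(X,x_i)$ (here $X$ is connected). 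A second point worth isolating at the outset is that the component joining such a pair may well be a tree, so Lemma~\ref{lem:components of tensor product}(2) is unavailable there; this forces the two implications to be handled by different means.

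For the direction from ``common component'' to ``not closed'', I would argue directly on $X$. A connecting edge-path in $X\otimes_{X_n}X$ from $(x_1,x_2)$ to $(x_2,x_1)$ projects to two edge-paths with the same label in $X_n$: a path from $x_1$ to $x_2$ and a path from $x_2$ to $x_1$, both reading a word whose reduced form is some $w\in F_n$. Concatenating them gives a loop at $x_1$ representing $w^2$, so $w^2\in\pi_1(X,x_1)$. Reducing the first path and using that an immersion admits at most one reduced lift of a given word from a given vertex, the reduced $w$-path from $x_1$ ends at $x_2\neq x_1$, so it is not a loop and $w\notin\pi_1(X,x_1)$. Hence $\pi_1(X,x_1)$, and therefore $H$, is not closed under square roots.

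For the converse I would invoke the fiber-product machinery. Given $g\in F_n$ with $g^2\in H$ but $g\notin H$, the identity $g(g^2)g^{-1}=g^2$ shows $g^2\in H\cap gHg^{-1}$, so this intersection is nontrivial and Lemma~\ref{lem:components of tensor product}(2) produces a component $C_g$ whose universal cover is the subtree $\widetilde X\cap g\widetilde X$ and into which some vertex $v\in\widetilde X\cap g\widetilde X$ maps as $(x_1,x_2)=(p(v),p(g^{-1}v))$, where $p\colon\widetilde X\to X$ is the projection and $\widetilde X$ is the lift of $X$ as in the proof of that lemma. Setting $v'=g^{-1}v$ and using $g^{-2}=(g^2)^{-1}\in H$, one checks that $v'$ again lies in $\widetilde X\cap g\widetilde X$ and maps to $(p(g^{-1}v),p(g^{-2}v))=(x_2,x_1)$ in the \emph{same} component $C_g$. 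Finally $x_1\neq x_2$, since $x_1=x_2$ would mean $v$ and $g^{-1}v$ lie in one $H$-orbit, forcing $g\in H$ by freeness of the $F_n$-action on the tree.

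The step I expect to require the most care is this last one: tracking the second coordinate through the composite $g\widetilde X\to\widetilde X\to X$ coming from Lemma~\ref{lem:components of tensor product}, verifying that the swapped point $(x_2,x_1)$ lands in the very same component $C_g$ rather than merely in some component, and confirming $x_1\neq x_2$. Conceptually, the one genuinely asymmetric feature of the argument is that a component witnessing the failure of the condition can be simply connected, so Lemma~\ref{lem:components of tensor product}(2) cannot be used in that direction and the elementary path argument of the second paragraph is essential.
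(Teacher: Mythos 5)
Your proof is correct, and the forward direction (a path joining $(x_1,x_2)$ to $(x_2,x_1)$ yields $w$ with $w^2\in H$, $w\notin H$) is essentially identical to the paper's, though you are more careful than the paper on two points worth keeping: the reduction to a vertex group $\pi_1(X,x_1)$ via conjugation-invariance of closure under square roots, and the use of unique reduced lifts through an immersion to rule out $w\in\pi_1(X,x_1)$ (including the degenerate case $w=1$). Where you diverge is the converse: the paper argues directly in $\widetilde X_n$ by noting that $w$ and $w^2$ share an axis $\ell$, that $\ell$ lies in the lift $\widetilde X$ because $w^2\in H$, and then projecting $\tilde x_1$ and $w\tilde x_1$; you instead observe $g^2\in H\cap gHg^{-1}$ and invoke Lemma~\ref{lem:components of tensor product}(2) to produce the component $C_g$, after which you must do the extra bookkeeping of showing $g^{-1}v\in\widetilde X\cap g\widetilde X$ and that it maps to the swapped pair. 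The two arguments rest on the same geometric fact (the axis of $g$ sits inside $\widetilde X\cap g\widetilde X$ — this is exactly what the proof of the cited lemma establishes), so the paper's version is shorter and self-contained, while yours buys a cleaner conceptual statement of \emph{which} component contains the two pairs, namely the one carrying $H\cap gHg^{-1}$. Your closing observation — that the forward direction cannot be run through Lemma~\ref{lem:components of tensor product}(2) because the relevant component may be a tree — is accurate and explains why both the paper and you handle that direction by a bare path argument.
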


\begin{proof}
  If $(x_1,x_2)$ and $(x_2,x_1)$ are connected by an edge
  path $\omega\subset X\otimes_{X'} X$, then $\omega$ maps
  to a loop in $X_n$ which gives a word $w\in F_n$ such that
  $w\notin H$ (since $\omega$ does not map to a loop in $X$)
  and $w^2\in H$ (since the word $w$ travels from $x_1$ to
  $x_2$, and it also travels from $x_2$ to
  $x_1$). Conversely, suppose there exists $w\in F_n$ such
  that $w\notin H$ and $w^2\in H$. Note that $w$ stabilizes
  an embedded line $\ell$ in the universal cover
  $\widetilde X_n$ of $X_n$. Since $w^2\in H$, $w^2$
  stabilizes an embedded line $\ell'$ in a lift
  $\widetilde X$ of $X$ in $\widetilde X_n$. Then
  $\ell'=\ell$. Pick a vertex
  $\tilde x_1\in\ell'\subset \widetilde X_n$ and let
  $\tilde x_2=w\tilde x_1\in \widetilde X_n$. Let $x_i$ be
  the image of $\tilde x_i$ under $\widetilde X\to X$. Then
  $x_1\neq x_2$ (since $w\notin H$) and $(x_1,x_2)$ and
  $(x_2,x_1)$ are in the same component (since $w^2\in H$).
\end{proof}

\section{Connectedness}

Another consequence of  Lemma~\ref{lem:components of tensor
  product} which will be useful is the following.

\begin{cor}\label{disconnected}
  Suppose $p:B\to X$ is a covering map of degree $d>1$ and $f:A\to X$ is an
  immersion. If $f$ lifts to an embedding $\tilde{f}:A\to B$, then the
  fiber product $A\otimes_{X}B$ is disconnected.
\end{cor}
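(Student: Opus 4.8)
The plan is to exhibit the image of the lift $\tilde f$ as a proper connected component of the fiber product, using the fact that the projection $A\otimes_X B\to A$ is a covering of degree $d>1$.

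First I would record that, since $p:B\to X$ is a covering of degree $d$, the projection $\pi:A\otimes_X B\to A$ onto the first factor is a covering map of degree $d$, as noted above. Next, the hypothesis that $f$ lifts to $\tilde f:A\to B$ (so $p\circ\tilde f=f$) lets me define a graph morphism $s:A\to A\otimes_X B$ by $s(a)=(a,\tilde f(a))$. This is well-defined precisely because $f(a)=p(\tilde f(a))$, so $(a,\tilde f(a))$ really is a point of the fiber product. By construction $\pi\circ s=\mathrm{id}_A$, so $s$ is a continuous section of the covering $\pi$.

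The crux is the standard covering-space fact that the image of a continuous section of a covering map is clopen. I would argue this directly: given $a\in A$, pick an evenly covered connected neighborhood $U$ of $a$, write $\pi^{-1}(U)=\bigsqcup_i V_i$ with each $\pi|_{V_i}:V_i\to U$ a homeomorphism, and say $s(a)\in V_{i_0}$. Continuity of $s$ together with $\pi\circ s=\mathrm{id}$ forces $s(U)=V_{i_0}$; hence $s(A)$ contains a full sheet around each of its points and is therefore open, while every other sheet $V_i$ with $i\neq i_0$ misses $s(A)$, so its complement is open as well. Thus $s(A)$ is a clopen subset of $A\otimes_X B$, and since $A$ is connected and nonempty, $s(A)$ is a single connected component (homeomorphic to $A$, as $\tilde f$ is an embedding).

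Finally, because $\pi$ has degree $d>1$, each fiber $\pi^{-1}(a)$ has $d>1$ points while $s(A)$ meets it in exactly one, so the complement $A\otimes_X B\setminus s(A)$ is nonempty. A space possessing a proper nonempty clopen subset is disconnected, which is the claim. The only real work is the clopen step; alternatively one can avoid it by invoking Lemma~\ref{lem:components of tensor product}(1), which shows that the component through $z=(a,\tilde f(a))$ has $\pi_1(Z,z)=\pi_1(A,a)\cap\pi_1(B,\tilde f(a))=\pi_1(A,a)$ (the last equality because $f_*\pi_1(A)\subseteq p_*\pi_1(B)$), so $Z\to A$ is a degree-one cover and further components are forced by a degree count. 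I expect this degree/clopen bookkeeping to be the main, though routine, point.
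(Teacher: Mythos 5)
Your proposal is correct and follows essentially the same route as the paper: identify $A$ with its image inside $A\otimes_X B$ via the section $a\mapsto(a,\tilde f(a))$, observe that this image is a whole connected component because $A\otimes_X B\to A$ is a degree-$d$ covering, and conclude that the complement is nonempty when $d>1$. You simply spell out the clopen/degree-count step that the paper leaves implicit.
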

\begin{proof}
  Note that by the definition of the fiber product, $A$ is
  contained the fiber product (identifying $A$ with its
  image under the embedding $\tilde{f}$). On the other hand,
  the fiber product is a cover of $A$, so $A$ itself has to be
  a connected component of the fiber product, which shows
  that the fiber product is disconnected when $d>1$.

\end{proof}

On the other hand, the following lemma provides a useful
condition for when the fiber product is connected.

\begin{lem}\label{spojne}
  Let $p$ be a prime and let $f:A\to X$ be an immersion
  between two connected graphs such
  that $$f_X:H_1(A,\Z\slash p\Z)\to H_1(X,\Z\slash p\Z)$$ is
  an isomorphism.  Let $X'\to X$ be a regular cover of
  degree $p$. Then $A\otimes_X X'$ is connected,
  $A\otimes_X X'\to A$ is a regular cover of degree $p$, and
  $H_1(A\otimes_X X',\mathbb Z/p\mathbb Z)$ and
  $H_1(X',\mathbb Z/p\mathbb Z)$ have the same rank.
\end{lem}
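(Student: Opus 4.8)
The plan is to encode the degree-$p$ regular cover as a first cohomology class and transport everything through the immersion $f$. Since $p$ is prime, the deck group of $X'\to X$ is $\mathbb{Z}/p\mathbb{Z}$, so $X'$ is a connected $\mathbb{Z}/p\mathbb{Z}$-cover classified by a surjection $\pi_1(X,x)\to\mathbb{Z}/p\mathbb{Z}$. As the target is abelian and killed by $p$, this surjection factors through $H_1(X,\mathbb{Z}/p\mathbb{Z})$ and thus corresponds to a nonzero class $\phi\in H^1(X,\mathbb{Z}/p\mathbb{Z})=\mathrm{Hom}(H_1(X,\mathbb{Z}/p\mathbb{Z}),\mathbb{Z}/p\mathbb{Z})$. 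By functoriality of the pull-back, the cover $A\otimes_X X'\to A$ is classified by $f^{*}\phi$, and by the remark recorded in the preliminaries (pull-back of a degree-$n$ cover is a degree-$n$ cover) it has degree $p$.

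First I would establish connectedness. A regular $\mathbb{Z}/p\mathbb{Z}$-cover of a connected graph is connected exactly when its classifying homomorphism is surjective, i.e. nonzero, since $\mathbb{Z}/p\mathbb{Z}$ has no proper nontrivial subgroups. The class $f^{*}\phi$ is the image of $\phi$ under the map $H^1(X,\mathbb{Z}/p\mathbb{Z})\to H^1(A,\mathbb{Z}/p\mathbb{Z})$ dual to $f_{*}\colon H_1(A,\mathbb{Z}/p\mathbb{Z})\to H_1(X,\mathbb{Z}/p\mathbb{Z})$. Because $\mathbb{Z}/p\mathbb{Z}$ is a field and $f_{*}$ is an isomorphism by hypothesis, the dual map is an isomorphism as well, so $f^{*}\phi\neq 0$ and hence $A\otimes_X X'$ is connected. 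The deck $\mathbb{Z}/p\mathbb{Z}$-action on $X'$ (acting on the second coordinate) induces a free action on $A\otimes_X X'$ that is transitive on the fibers over $A$; once connectedness is known this shows that $A\otimes_X X'\to A$ is a regular cover of degree $p$, giving the first two assertions.

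For the equality of ranks I would pass to Euler characteristics. For any connected graph $Y$ one has $\dim_{\mathbb{Z}/p\mathbb{Z}}H_1(Y,\mathbb{Z}/p\mathbb{Z})=1-\chi(Y)$, and a degree-$p$ cover multiplies the Euler characteristic by $p$. Applying this to the two connected degree-$p$ covers gives $\dim H_1(X',\mathbb{Z}/p\mathbb{Z})=1-p\chi(X)$ and $\dim H_1(A\otimes_X X',\mathbb{Z}/p\mathbb{Z})=1-p\chi(A)$. Finally, since $f_{*}$ is an isomorphism on $H_1(-,\mathbb{Z}/p\mathbb{Z})$ and both $A$ and $X$ are connected, $\chi(A)=1-\dim H_1(A,\mathbb{Z}/p\mathbb{Z})=1-\dim H_1(X,\mathbb{Z}/p\mathbb{Z})=\chi(X)$, so the two ranks coincide.

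The only genuinely delicate point is the translation in the second paragraph: identifying connectedness of the pulled-back cover with nonvanishing of $f^{*}\phi$, and recognizing $f^{*}$ as the dual of the given isomorphism $f_{*}$. Once this dictionary between connected regular $\mathbb{Z}/p\mathbb{Z}$-covers and nonzero classes in $H^1(-,\mathbb{Z}/p\mathbb{Z})$ is set up, the remaining work is routine bookkeeping with Euler characteristics.
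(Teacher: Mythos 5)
Your proposal is correct and follows essentially the same route as the paper: both prove connectedness by observing that the surjection $\pi_1(X)\to\mathbb{Z}/p\mathbb{Z}$ classifying $X'$ factors through $H_1(X,\mathbb{Z}/p\mathbb{Z})$, so the isomorphism hypothesis transports surjectivity to $\pi_1(A)$ --- you merely phrase this dually as $f^{*}\phi\neq 0$ in $H^1$. For the rank count the paper invokes the Nielsen--Schreier formula where you use multiplicativity of the Euler characteristic, but for graphs these are the same computation.
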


\begin{proof}
  Let $G=\pi_1(X,x)$ and let $x'\in X'$ be a lift of
  $x$. Then $G'=\pi_1(X',x')$ can be identified as the
  kernel of a surjective homomorphism
  $h:G\to \mathbb Z/p\mathbb Z$. Let $H=\pi_1(A,a)$ where
  $f(a)=x$. Then we have the following commutative
  diagram: $$\begin{matrix}
    H  &\longrightarrow & G &\longrightarrow   & \mathbb Z/p\mathbb Z \\
    &&&&\\
    \downarrow & &\downarrow   & \nearrow &  \\
    &&&&\\
    H_1(A,\mathbb Z/p\mathbb Z) &\longrightarrow   & H_1(X,\mathbb Z/p\mathbb Z) &  & \\
  \end{matrix}$$
\vspace{0.5cm}

  Note that the map $G\to H_1(A,\mathbb Z/p\mathbb Z)$
  factors as
  $G\to H_1(A,\mathbb Z)\to H_1(A,\mathbb Z/p\mathbb Z)$,
  where the first map is the abelianization map, and the
  second map is tensoring with $\mathbb Z/p\mathbb Z$. Since
  $p$ is prime, $G\to \mathbb Z/p\mathbb Z$ factors as the
  composition of two surjective homomorphisms
  $G\to H_1(X,\mathbb Z/p\mathbb Z)\to \mathbb Z/p\mathbb
  Z$.
  Since
  $H_1(A,\mathbb Z/p\mathbb Z) \to H_1(X,\mathbb Z/p\mathbb
  Z)$
  is an isomorphism, the composition
  $H\to g\to \mathbb Z/p\mathbb Z$ is surjective. Thus
  $H\cap G'$ is a normal subgroup of index $p$ in $H$. It follows that the
  connected component of $A\otimes_X X'$ containing $(a,x)$
  is a $p$-sheet regular cover of $A$, thus $A\otimes_X X'$ can not
  have other connected components.

  Since $H_1(A,\mathbb Z/p\mathbb Z)$ and
  $H_1(X,\mathbb Z/p\mathbb Z)$ are isomorphic, $\pi_1 A$
  and $\pi_1 X$ have the same rank. By
  Theorem~\ref{thm:rank} and the previous paragraph,
  $\pi_1 (A\otimes_X X')$ and $\pi_1 X'$ have the same
  rank. Thus the lemma follows.
\end{proof}

\section{Gersten's lemma}

In this section we prove a version of the Adams lemma \cite{adams}, proved
originally for $\Z$. The statement we need in Lemma
\ref{gersten} below appears implicitly in the work of
Gersten \cite{gersten} but we provide a short proof for completeness.

Let $p$ be a prime number. Given a ring $R$ with identity
$1$, write $R[t]_p$ for  $R[t]\slash(1-t^p)$. Suppose $M$ and $N$ are free
 $R$-modules and $\hat{M}$ and $\hat{N}$ are free $R[t]_p$
 modules such that $M$ and $\hat{M}$ as well as $\N$ and
 $\hat{N}$ have the same rank. Given bases $a_i$ and $\hat{a}_i$ of $M$ and
 $\hat{M}$, respectively, and $b_j$ and $\hat{b_j}$ be bases
 of $N$ and $\hat{N}$, respectively. Write $\phi_M$ for the
map induced by $a_i\mapsto \hat{a}_i$ and $t\mapsto 1$ and $\phi_N$ for the
map induced by $b_j\mapsto \hat{b}_j$ and $t\mapsto 1$.

We are going to use the following claim

\begin{claim}\label{module}
  Let $M$ and $N$ be free $(\Z\slash p\Z)$-modules and
  $\hat{M}$ and $\hat{N}$ be free $(\Z\slash
  p\Z)[t]_p$-modules such that $M$ and $\hat{M}$ as well as $N$ and
 $\hat{N}$ have the same rank and
  $\varphi_M:M\to\hat{M}$, $\varphi_N:N\to\hat{N}$ are as
  above. Suppose $f:M\to N$ is an $(\Z\slash p\Z)$-homomorphism and
  $\hat{f}:\hat{M}\to\hat{N}$ is an $(\Z\slash p\Z)[t]_p$-homomorphism
  such that the following diagram commutes

$$
  \begin{CD}
    \hat{M} @ >\hat{f} >> \hat{N}\\
    @VV \phi_M V @VV \phi_N V\\
    M @> f >> N
  \end{CD}
  $$
    If $f$
    is 1-1, then $\hat{f}$ is 1-1 too.

\end{claim}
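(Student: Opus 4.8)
The plan is to recognize the ring $S := (\mathbb{Z}/p\mathbb{Z})[t]_p = (\mathbb{Z}/p\mathbb{Z})[t]/(1-t^p)$ as a \emph{local} ring and to reinterpret the vertical maps $\phi_M,\phi_N$ of the diagram as reduction modulo its maximal ideal. Over $\mathbb{Z}/p\mathbb{Z}$ the Frobenius identity gives $1-t^p = -(t-1)^p$, so the substitution $s=t-1$ yields an isomorphism $S \cong (\mathbb{Z}/p\mathbb{Z})[s]/(s^p)$. This is a local ring whose maximal ideal $\mathfrak{m}=(s)=(t-1)$ is nilpotent of order $p$ and whose residue field $S/\mathfrak{m}$ is $\mathbb{Z}/p\mathbb{Z}$. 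Evaluation at $t=1$ is exactly the quotient $s\mapsto 0$, so $\phi_M$ and $\phi_N$ are the canonical surjections $\hat M\to\hat M/\mathfrak{m}\hat M$ and $\hat N\to\hat N/\mathfrak{m}\hat N$; under the basis identifications $\hat a_i\mapsto a_i$, $\hat b_j\mapsto b_j$ the targets are $M$ and $N$. Since $\hat f$ is $S$-linear it sends $\mathfrak{m}\hat M$ into $\mathfrak{m}\hat N$, hence descends to a map of residue spaces, and commutativity of the square identifies this induced map with $f$. Thus the claim reduces to the purely local statement: a homomorphism of free modules over a local ring whose reduction modulo the maximal ideal is injective is itself injective.

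To prove this, let $A$ be the matrix of $\hat f$ in the chosen bases, an $n\times m$ matrix over $S$ with $m=\operatorname{rank}\hat M=\dim_{\mathbb{Z}/p\mathbb{Z}}M$ and $n=\operatorname{rank}\hat N=\dim_{\mathbb{Z}/p\mathbb{Z}}N$. The reduction $\bar A$ over the residue field represents $f$, which is injective, so $\bar A$ has full column rank $m$ (in particular $m\le n$). Hence some $m$ rows of $\bar A$ form an invertible $m\times m$ matrix; let $A'$ be the corresponding $m\times m$ submatrix of $A$. The reduction of $\det A'$ modulo $\mathfrak{m}$ is $\det\bar A'\neq 0$, and since the non-units of a local ring are precisely the elements of $\mathfrak{m}$, the element $\det A'$ is a unit, so $A'$ is invertible over $S$. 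Writing $\pi\colon\hat N\to S^m$ for the projection onto the $m$ chosen coordinates, the composite $\pi\circ\hat f$ has matrix $A'$ and is therefore an isomorphism; being injective, it forces $\hat f$ to be injective as well.

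I expect the only genuine content to lie in the first step: once $S$ is identified as a local ring with residue field $\mathbb{Z}/p\mathbb{Z}$, the remainder is the standard, Nakayama-flavoured fact that reduction modulo the maximal ideal detects injectivity of maps between free modules. The main routine obstacle is bookkeeping the basis identifications so that the induced map on residue spaces is literally $f$; this is exactly where commutativity of the given square is used. If one prefers to avoid choosing a submatrix, the local statement can instead be proved by filtering by powers of $\mathfrak{m}$: any nonzero $x$ in a free $S$-module is $s^k y$ with $0\le k<p$ and $\bar y\neq 0$, and if $\hat f(x)=0$ then $\overline{\hat f(y)}=f(\bar y)\neq 0$, so $\hat f(y)$ has a unit coordinate and $s^k\hat f(y)\neq 0$, a contradiction.
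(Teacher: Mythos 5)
Your proposal is correct, and its headline argument is genuinely different from the paper's. You first identify $(\Z/ p\Z)[t]_p\cong(\Z/p\Z)[s]/(s^p)$ as a local ring with nilpotent maximal ideal $\mathfrak m=(1-t)$ and residue field $\Z/p\Z$, observe that $\phi_M,\phi_N$ are reduction modulo $\mathfrak m$ under the basis identifications, and then invoke the standard fact that a map of finite-rank free modules over a local ring is injective once its reduction modulo $\mathfrak m$ is, proved by extracting an $m\times m$ submatrix whose determinant is a unit. The paper instead argues directly on elements: given $0\neq\alpha\in\hat M$, it factors out the maximal power $(1-t)^k$ with $k<p$ (possible since $(1-t)^p=1-t^p=0$), notes that the quotient $\alpha_1$ has $\phi_M(\alpha_1)\neq0$ by maximality of $k$, pushes this through $f$ to find a coordinate of $\hat f(\alpha_1)$ not divisible by $(1-t)$, and concludes $(1-t)^k\hat f(\alpha_1)\neq0$. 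That is precisely the ``filtration by powers of $\mathfrak m$'' alternative you sketch in your last sentence, so your fallback coincides with the paper's proof. The comparison: your determinant route is more conceptual (it makes visible that the only input is locality of the ring plus Nakayama-style reduction) but uses finiteness of the ranks to select a square submatrix; the paper's element-chasing route is more elementary, avoids matrices entirely, and would survive without the finite-rank hypothesis. Both are complete; the only bookkeeping you rightly flag --- that the induced map on residue modules is literally $f$ --- is exactly where the commutative square is consumed in either version.
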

\begin{proof}
  Let $\alpha\in \hat{M}$ be a nonzero element. Note that in
  $\Z\slash p\Z[t]_p$ we have $(1-t)^p=1-t^p=0$ and choose maximal
  $k<p$ such that $\alpha=(1-t)^k\alpha_1$ for some $\alpha_1$. Note that
  $\phi_M(\alpha_1)\not=0$ because otherwise all coordinates
  of $\alpha_1$ would be divisible by $(1-t)$, which
  contradicts maximality of $k$. 

  Now, we have $f(\phi_M(\alpha_1))\not=0$ because $f$ is
  1-1. Thus, $\phi_N(\hat{f}(\alpha_1))\not=0$ too. This
  means that at least one coordinate of $\hat{f}(\alpha_1)$
  is not divisible by $(1-t)$. Therefore, at least one
  coordinate of $(1-t)^k\hat{f}(\alpha_1)$ is not divisible
  by $(1-t)^p$ and thus is not zero. Consequently,
  $\hat{f}(\alpha)=(1-t)^k\hat{f}(\alpha_1)$ is nonzero.
\end{proof}

\begin{lemma}[Gersten]\label{gersten}
  Let $X$ and $Y$ be graphs. Let $p$ be a prime and
  $\pi_X:\hat{X}\to X$, $\pi_Y:\hat{Y}\to Y$ be
  $\mathbb{Z}\slash p\Z$-covers. Suppose $f:X\to Y$ is a
  continuous map and $\hat{f}:\hat{X}\to\hat{Y}$ is a lift
$$
  \begin{CD}
    \hat{X} @ >\hat{f} >> \hat{Y}\\
    @VV \pi_X V @VV \pi_Y V\\
    X @> f >> Y
  \end{CD}
  $$
   and the map $$f_{*}: H_1(X,\Z\slash p\Z)\to H_1(Y,\Z\slash p\Z)$$
    is 1-1. Then the map $$\hat{f}_{*}: H_1(\hat{X},\Z\slash p\Z)\to H_1(\hat{Y},\Z\slash p\Z)$$
    is 1-1 too.

\end{lemma}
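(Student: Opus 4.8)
The plan is to transport the situation into the module-theoretic setting of Claim~\ref{module}. Write $R=\mathbb{Z}/p\mathbb{Z}$ and let $t$ be a generator of the deck group $\mathbb{Z}/p\mathbb{Z}$ of each cover. Since the deck action on a cover of graphs is free and cellular, the cellular chain groups $C_i(\hat X,R)$ and $C_i(\hat Y,R)$ become \emph{free} modules over the group ring $R[\mathbb{Z}/p\mathbb{Z}]\cong R[t]_p$, with an $R[t]_p$-basis given by one cell over each cell of $X$ (resp.\ $Y$); in particular their ranks equal the numbers of cells of $X$ and $Y$. The covering projections induce on chains the maps $\phi_X\colon C_*(\hat X,R)\to C_*(X,R)$ and $\phi_Y\colon C_*(\hat Y,R)\to C_*(Y,R)$ sending each cell to its image; these are precisely reduction modulo $(1-t)$, i.e.\ the substitution ``$t\mapsto 1$'' of Claim~\ref{module}, and they are chain maps realizing $C_*(\hat X,R)\otimes_{R[t]_p}R\cong C_*(X,R)$. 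Taking $f$ (hence its lift $\hat f$) cellular and equivariant for the deck actions, $\hat f_{\#}$ is $R[t]_p$-linear and the square $\phi_Y\hat f_{\#}=f_{\#}\phi_X$ commutes. As $X,Y,\hat X,\hat Y$ are $1$-dimensional, $H_1=Z_1=\ker\partial$, so on cycles $f_{\#}$ restricts to $f_*$ and $\hat f_{\#}$ restricts to $\hat f_*$.

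I expect the main obstacle to be that one cannot feed the \emph{homology} modules directly into Claim~\ref{module}: although the chain groups $C_i(\hat X,R)$ are free over $R[t]_p$, the submodule $H_1(\hat X,R)=\ker\partial$ is in general \textbf{not} free (for the circle it is the trivial module $R$, not a free $R[t]_p$-module, so it cannot play the role of $\hat M$). The way around this is to run the divisibility argument of Claim~\ref{module} inside the ambient free chain module $C_1(\hat X,R)$ rather than inside $H_1$.

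Concretely, suppose for contradiction that $\alpha\in H_1(\hat X,R)=Z_1(\hat X,R)\subseteq C_1(\hat X,R)$ is nonzero with $\hat f_*(\alpha)=0$. Put $s=1-t$, so that $s^p=1-t^p=0$ in $R[t]_p$, and write $\alpha=s^k\alpha_1$ with $k\le p-1$ chosen maximal; maximality forces some coordinate of $\alpha_1$ to be a unit, whence $\phi_X(\alpha_1)\neq 0$. The key point, which is the new ingredient beyond Claim~\ref{module}, is that $\phi_X(\alpha_1)$ is again a \emph{cycle}: from $0=\partial\alpha=s^k\partial\alpha_1$ in the free module $C_0(\hat X,R)$ one reads off that each coordinate of $\partial\alpha_1$ is annihilated by $s^k$, hence lies in $(s^{p-k})$, so $\phi_X(\partial\alpha_1)=0$ and therefore $\partial_X\phi_X(\alpha_1)=0$. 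Thus $\phi_X(\alpha_1)$ is a nonzero class in $H_1(X,R)$, and since $f_*$ is injective, $f_*\phi_X(\alpha_1)\neq 0$. By commutativity of the square this equals $\phi_Y\hat f_{\#}(\alpha_1)$, so $\hat f_{\#}(\alpha_1)$ has a unit coordinate; as $k<p$ this gives $\hat f_{\#}(\alpha)=s^k\hat f_{\#}(\alpha_1)\neq 0$, contradicting $\hat f_*(\alpha)=0$.

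Two technical points need care. First, equivariance of the lift: a lift $\hat f$ intertwines the deck actions through a homomorphism $\chi\colon\mathbb{Z}/p\mathbb{Z}\to\mathbb{Z}/p\mathbb{Z}$, and I would normalize the generators so that $\chi=\mathrm{id}$ (automatic in the applications, where $\hat X$ is the pull-back $X\otimes_Y\hat Y$); if $\chi$ is a nontrivial automorphism $t\mapsto t^j$ the argument is unchanged, since such a map preserves the $(s)$-adic filtration and hence the order of $s$-divisibility used above. Second, one must confirm that $\phi_X$ is indeed reduction modulo $(1-t)$ and a chain map, and that it sends cycles upstairs onto $H_1(X)=Z_1(X)$; these are the standard facts about the projection $C_*(\hat X,R)\otimes_{R[t]_p}R\cong C_*(X,R)$. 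Everything else is exactly the pigeonhole on $s$-divisibility from the proof of Claim~\ref{module}, now carried out in the free chain modules and combined with the cycle-reduces-to-a-cycle observation to yield the statement about $H_1$.
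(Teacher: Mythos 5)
Your proof is correct, and it reaches the conclusion by a genuinely different route from the paper's, even though the computational engine --- the $(1-t)$-adic pigeonhole in $(\Z\slash p\Z)[t]_p$ --- is the same one that drives Claim~\ref{module}. The paper feeds homology into Claim~\ref{module} by manufacturing an intermediate free $(\Z\slash p\Z)[t]_p$-module $\hat{M}$ with $H_1(\hat{X},\Z\slash p\Z)\subseteq \hat{M}\subseteq C_1(\hat{X},\Z\slash p\Z)$: it chooses a basis $a_1,\ldots,a_n$ of $\pi_1(X)$ adapted to the epimorphism onto $\Z\slash p\Z$, writes down a Schreier basis of the kernel, checks that the resulting homology basis lies in the module generated by the lifts $\hat{a}_i$, and proves freeness of that module (Claim~\ref{free}) by a dimension count resting on the Nielsen--Schreier formula (Theorem~\ref{thm:rank}). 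You instead run the divisibility argument directly in the full chain module $C_1(\hat{X},\Z\slash p\Z)$, which is free over the group ring for trivial reasons (the deck action is free on cells), and you compensate for the fact that $H_1(\hat{X},\Z\slash p\Z)$ itself need not be free over $(\Z\slash p\Z)[t]_p$ --- your circle example is exactly the obstruction that forces the paper to build $\hat{M}$ --- by the extra observation that the $s$-primitive part $\alpha_1$ of a cycle $\alpha=s^k\alpha_1$ still reduces to a cycle downstairs, because the annihilator of $s^k$ in $(\Z\slash p\Z)[s]\slash(s^p)$ is $(s^{p-k})\subseteq(s)$. This buys a shorter argument that dispenses with the Schreier-basis computation, Claim~\ref{free} and the rank formula, and that does not need $\hat{X}$ to be connected. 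The one hypothesis both arguments rely on is that $\hat{f}$ intertwines the deck actions so that $\hat{f}_{\#}$ is $(\Z\slash p\Z)[t]_p$-(semi)linear; the paper uses this silently when it invokes Claim~\ref{module}, whereas you flag it explicitly and correctly observe that it holds (with $\chi=\mathrm{id}$) in the only situation where the lemma is applied, namely $\hat{X}=X\otimes_Y\hat{Y}$ as in Corollary~\ref{isomorfizm}. That caveat is not cosmetic: for a lift with trivial intertwining homomorphism (e.g.\ $X$ a circle mapped onto one petal of a wedge, $\hat{Y}$ the cover on which that petal lifts to loops, and $\hat{X}$ the connected $p$-fold cover of the circle) the conclusion of the lemma actually fails, so your version, which records where equivariance is used, is if anything more careful than the paper's.
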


\begin{proof}

  Suppose the fundamental group of $X$ isomorphic to $F_n$, and the
  finite cover $\hat X$ is corresponding to the kernel of an
  epimorphism $h:F_n \to \Z/p\Z$.  Let $\{a_i:i\leq n\}$ be the
  generators of $F_n$ and let $b_i=h(a_i)$. Since $h$ is surjective,
  at least one of the $b_i$, say $b_1$ is nontrivial. We assume
  without loss of generality that $b_1=1$. By modifying other $a_i$'s for
  $i\neq 1$, we can assume that $b_i=0$ for $i\neq 1$ (but $a_i$'s still
  form a basis of $F_n$). 

  First note that the following is a set of generators of the kernel of
  $h$:
  \[\begin{array}{ccccc}
    (a_1)^p & & & & \\
  a_2, & a_1 a_2 (a_1)^{-1} & (a_1)^2 a_2 (a_1)^{-2},  & \ldots & (a_1)^{p-1}a_2(a_1)^{1-p} \\
          & & $\ldots$ & & \\
  a_n, &  a_1 a_n(a_1)^{-1}, & (a_1)^2 a_n (a_1)^{-2} & \ldots & (a_1)^{p-1} a_n (a_1)^{1-p}
  \end{array}\]

  This gives a basis for the first homology of $\hat X$.
  Let $\hat{a_i}$ be a lift of $a_i$. Let $t$ be the action by deck
  transformation of a generator of $\Z\slash p\Z$.  This
  gives a structure of a $\Z\slash p\Z[t]_p$-module on the
  chain group of $\hat{X}$. Write $\hat{M}$ for the 
  sub-$(\Z\slash p\Z)[t]_p$-module generated by $\hat{a_i}$'s. 

   One
  checks directly that each element of the above basis of
  the homology of $\hat{X}$ is contained in
  $\hat{M}$, thus $H_1(\hat{X},\Z\slash p\Z)$ is contained in $\hat{M}$.

Note
  that $\hat{M}$ is equal
  (as a set) to the sub-$(\Z\slash p\Z)$-module generated by
  $t^k \hat{a_i}$ where $k$ is ranging between $0$ and $p-1$.

  \begin{claim}\label{free}
     $\hat{M}$ is a free $(\Z\slash p)\Z[t]_p$-module.  
  \end{claim}
  \begin{proof}
    We show that $\{t^k \hat{a_i}:0\le k\le p-1, 1\le i\le n\}$ is
    linearly independent in the first chain group of $\hat X$, viewed
    as a $\Z/p\Z$-module. It suffices to show that the
    sub-$(\Z/p\Z)$-module $E$ generated by $\{t^k\hat{a_i}\}$ has dimension
    $pn$ (since $p$ is prime, we can think of $\Z/p\Z$ vector spaces
    rather than $\Z/p\Z$-modules).

    Consider the two subspaces of $E$: $E_1$ generated by
    $\{t^k \hat{a_1}: 0\le k\le p-1\}$, and $E_2$ generated by
    $\hat{a_1}+t\hat{a_1}+...+t^{p-1}\hat{a_1}$ and
    $\{t^k \hat{a_i}:0\le k\le p-1, i\ge 2\}$. Note that $E_2$ is equal to
    the first homology of $\hat X$. Hence $\mathrm{dim}(E_2)=1+p(n-1)$
    by Theorem \ref{thm:rank}. It
    is easy to see that $\mathrm{dim}(E_1) = p$. Moreover,
    $E_1\cap E_2$ is the line spanned by
    $\hat{a_1}+t\hat{a_1}+...+t^{p-1}\hat{a_1}$. So
    $$\mathrm{dim}(E) =\mathrm{dim}(E_1)+\mathrm{dim}(E_2) -
    \mathrm{dim}(E_1\cap E_2)=pn.$$
  \end{proof}

   Now write $M$ for $H_1(X,\Z\slash p\Z)$ and let $\hat{M}$ be as
   above. Note that $M$ is a free $(\Z\slash p\Z)$-module and
   $\hat{M}$ is a free $(\Z\slash p\Z)[t]_p$-module by Claim
   \ref{free}. Let $N$ be the first chain group of $Y$, and let $\hat N$ be
   the first chain group of $\hat{Y}$ (both with coefficients $\Z\slash
   p\Z$). Note that $N$ is a $(\Z\slash p\Z)$-module and
   $\hat{N}$ is a $(\Z\slash p\Z)[t]_p$-module where $t$
   corresponds to the generator of the deck transformation
   group. $N$ is a free $(\Z\slash p\Z)$-module, by its
   definition. The basis elements of $N$ correspond to the
   edges of $Y$. The module $\hat{N}$ is free as a $(\Z\slash
   p\Z)[t]_p$-module and if for each edge $e$ of $Y$ we pick a
   lift $\hat{e}$ of $e$ to $\hat{Y}$, the the set of all
   $\hat{e}$ forms a basis of $\hat{N}$ as a $(\Z\slash
   p\Z)[t]_p$-module.

   Since all spaces considered are graphs, we treat the first homology
   group as a subgroup of the first chain group.  Note that the map
   $M\to N$ is injective, since
   $M$ is first mapped to the first homology group of
   $Y$
   (which is injective by assumption), then the first homology group
   of $Y$
   is inside the first chain group of $Y$. 

   We have a diagram induced by the continuous maps
   $$
   \begin{CD}
     \hat{M} @ >\hat{f} >> \hat{N}\\
     @VV \phi_M V @VV \phi_N V\\
     M @> f >> N
   \end{CD}
  $$

  So the map $\hat{f}$ is injective by Claim \ref{module}, hence
  we have injectivity restricted to the homology of $\hat X$, which is a
  subgroup of $\hat M$.

\end{proof}

\begin{cor}\label{isomorfizm}
  Let $p$ be a prime and let $f:A\to X$ be an immersion
  between two connected graphs such
  that $$f_X:H_1(A,\Z\slash p\Z)\to H_1(X,\Z\slash p\Z)$$ is
  an isomorphism.  Let $X'\to X$ be a regular cover of
  degree $p$ and $\hat{f}:A\otimes_X X'\to X'$ be a lift of
  $f$. 
$$
  \begin{CD}
    {A\otimes_{X} X'} @ >\hat{f} >> X'\\
    @VV  V @VV \ V\\
    A @> f >> X
  \end{CD}
  $$
Then $A\otimes_X X'$ is connected, and
  $$\hat{f}_*:H_1(A\otimes_X X',\mathbb Z/p\mathbb Z)\to
  H_1(X',\mathbb Z/p\mathbb Z)$$ is an isomorphism.
\end{cor}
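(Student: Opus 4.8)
The plan is to combine Lemma~\ref{spojne} with Gersten's Lemma~\ref{gersten}, which together supply every ingredient; the corollary is essentially their conjunction. First I would invoke Lemma~\ref{spojne} verbatim, since its hypotheses are exactly those of the corollary: an immersion $f:A\to X$ between connected graphs inducing an isomorphism on $H_1(-,\Z/p\Z)$, together with a regular degree-$p$ cover $X'\to X$. This immediately yields three facts: that $A\otimes_X X'$ is connected, that the projection $A\otimes_X X'\to A$ is a regular cover of degree $p$, and that $H_1(A\otimes_X X',\Z/p\Z)$ and $H_1(X',\Z/p\Z)$ have the same rank. The connectedness is already the first of the two assertions we must prove, so only the statement about $\hat f_*$ remains.

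Next I would check the hypotheses of Lemma~\ref{gersten} under the dictionary $(\text{Gersten's }X,Y,\hat X,\hat Y)=(A,\,X,\,A\otimes_X X',\,X')$. Since $p$ is prime, any regular cover of degree $p$ has deck group of order $p$, hence isomorphic to $\Z/p\Z$; thus $X'\to X$ is a $\Z/p\Z$-cover by hypothesis, and $A\otimes_X X'\to A$ is a $\Z/p\Z$-cover by the previous paragraph. The map $\hat f$ is the given lift of the immersion $f$ (which is in particular continuous), and the assumed isomorphism $f_\ast:H_1(A,\Z/p\Z)\to H_1(X,\Z/p\Z)$ is in particular injective. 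Gersten's Lemma then concludes that $\hat f_\ast:H_1(A\otimes_X X',\Z/p\Z)\to H_1(X',\Z/p\Z)$ is injective.

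Finally, the isomorphism follows formally: $\hat f_\ast$ is an injective $\Z/p\Z$-linear map between two vector spaces of the same finite dimension, hence a bijection. The only point needing a word of care is the finiteness of these dimensions, but since the graphs in question are finite Stallings graphs (equivalently, $\pi_1$ is finitely generated and the covers have finite degree), the homology groups are finite-dimensional, and the rank equality furnished by Lemma~\ref{spojne} closes the argument. I do not anticipate a genuine obstacle here: all the analytic content is carried by Lemmas~\ref{spojne} and~\ref{gersten}, and the corollary merely packages them with the elementary observation that an injective endomorphism-like map between equidimensional finite spaces is an isomorphism.
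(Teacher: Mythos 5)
Your proposal is correct and matches the paper's own argument exactly: the paper also derives connectedness and the rank equality from Lemma~\ref{spojne}, injectivity of $\hat f_*$ from Lemma~\ref{gersten}, and concludes via the fact that an injective map between $\Z/p\Z$-vector spaces of the same finite dimension is an isomorphism. Your write-up simply spells out the dictionary and the finiteness check in more detail than the paper's one-line proof.
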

\begin{proof}
  This follows directly from Lemma \ref{spojne} and Lemma
  \ref{gersten} since a 1-1 map between $\Z\slash p\Z$
  vector spaces of the same dimension must be an isomorphism.
\end{proof}

\section{Extending partial automorphisms}\label{sec:eppa}

For a natural number $l$, a sequence of $l$-tuples of
distinct elements of a set $X$ is a cycle if it is of the form
$$(x_1, x_2,\ldots,x_l), (x_2,\ldots, x_l, x_1),\ldots,(x_l,x_1,\ldots,x_{l-1}).$$

Given a group $G$ acting on a set $X$, the natural action of
$G$ on $X^l$ is the coordinatewise action. Note that for
$\bar{x}=(x_1,\ldots,x_l)\in X^l$, the $G$-orbit of $\bar{x}$ contains a
cycle if and only if there is a $g\in G$ such that $g(x_1)=x_2,\ldots,g(x_l)=x_1$.

\begin{definition}
Given a set $L$ of natural numbers, an
\textit{$L$-hypergraph} is a relational structure with one
relational symbol of arity $l$ for every $l\in L$.  
\end{definition}
Note that
a $\{2\}$-hypergraph is simply a directed graph.

Given a relational symbol $R$ of arity $l$ and a permutation
$\sigma\in\mathrm{Sym}(l)$ we write $R_\sigma$ for the
relation
$$R_\sigma(x_1,\ldots,x_l)\quad\quad\mbox{iff}\quad\quad R(x_{\sigma(1)},\ldots,x_{\sigma(l)}).$$

\begin{definition}
  Suppose $L$ is a set of natural numbers. An $L$-hypergraph
  $M$ is an \textit{$L$-hypertournament} if whenever $R$ is
  an $l$-relational symbol for $l\in L$, then for every
  tuple of distinct elements
  $\bar{x}\in M^l$ there exists a permutation
  $\sigma\in \mathrm{Sym}(\{1,\ldots,l\})$ such
  that $$M\models R_\sigma(\bar{x})$$
  and for every $R$ and permutation $\sigma$ there does not
  exists a cycle $\bar{x}_1,\ldots,\bar{x}_l$ such that
  that $$M\models R_\sigma(\bar{x}_i)$$ for
  every $i\leq l$.
\end{definition}

\noindent Note that a $\{2\}$-hypertournament is just a tournament.

The following lemma is a revamp of the analogous equivalence
proved by Herwig and Lascar for tournaments \cite{hl}.

\begin{lemma}\label{rownowaznosc}
 Let $L$ be a nontrivial set of prime numbers. The following are
 equivalent:
 \begin{itemize}
\item[(i)] The class of $L$-hypertournaments has the Hrushovski property. 
\item[(ii)] Every finitely generated subgroup of $F_n$ which is
   closed under $l$-roots for all $l\in L$ is closed in the
   pro-$L^\perp$ topology.
 
 \end{itemize}

\end{lemma}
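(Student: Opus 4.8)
The plan is to establish the equivalence by translating both conditions into the language of immersed graphs and finite covers of the wedge $X_n$, following the dictionary set up in Section~\ref{sec:prelim} and refined by Herwig and Lascar for tournaments. A finite collection of partial automorphisms $\varphi_1,\ldots,\varphi_n$ of a structure $M$ is the same data as an immersed graph $X$ with $n$ letters on the vertex set $M$, whose fundamental group is a finitely generated subgroup $H\le F_n$; extending all $\varphi_i$ simultaneously to automorphisms of a finite structure $M'$ amounts to embedding $X$ into a finite cover $\hat X\to X_n$, since the $\varphi_i$ become total bijections exactly when the immersed graph becomes a cover. The essential bridge is that, because every $l\in L$ is prime, a finite group $G$ acting on a finite set has an element cyclically permuting $l$ distinct points if and only if $l$ divides $|G|$; and such an $l$-cycle is precisely what the hypertournament axiom forbids. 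I would first record two dictionary facts: (a) the hypertournament axioms on $M$ force $H=\pi_1(X)$ to be closed under $l$-roots for every $l\in L$, because a witness $g^l\in H$, $g\notin H$ produces $l$ vertices cyclically permuted by the partial automorphism $g$, and — $l$ being prime, so that the period is $1$ or $l$ — this yields a monochromatic $l$-cycle among tuples that must lie in some $R_\sigma$; and (b) if a finite group $G$ acts on a finite set $V$ with $|G|$ coprime to every $l\in L$, then $G$ admits no $l$-cycles, and one can build a $G$-invariant $L$-hypertournament structure on $V$ extending any given $G$-compatible structure on an invariant subset.

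For the implication (ii)~$\Rightarrow$~(i), I would start from $M$ and $\varphi_1,\ldots,\varphi_n$, form $X$ and $H=\pi_1(X)$, and use dictionary fact (a) to conclude $H$ is closed under $l$-roots for all $l\in L$. Hypothesis (ii) then gives that $H$ is closed in the pro-$L^\perp$ topology. I would invoke the Stallings-graph characterization of separability to convert this into a concrete embedding: closedness of $H$ lets one separate the finitely many ``defects'' of $X$ (the finitely many pairs of vertices and incomplete edges) inside a single finite cover $\hat X_n\to X_n$ whose degree is an $L^\perp$-number, into which $X$ embeds. The monodromy group $G$ of $\hat X_n$ is then a quotient of $F_n$ of $L^\perp$-order, so by dictionary fact (b) it carries a $G$-invariant $L$-hypertournament structure extending the one on $M$; this structure is the desired $M'$, and the generators $a_i$ act as automorphisms extending the $\varphi_i$.

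For the implication (i)~$\Rightarrow$~(ii), let $H\le F_n$ be finitely generated and closed under $l$-roots for all $l\in L$, represented by an immersed graph $X$ with basepoint $x_0$, and fix $g\notin H$. I would enlarge $X$ to an immersed graph $X'$ by attaching a folded path spelling $g$ from $x_0$, whose terminal vertex $y_g$ is distinct from $x_0$ (possible since $g\notin H$), arranging the attachment with fresh vertices so that no new short cycles, and in particular no new $l$-cycles, are created; dictionary fact (a), together with closure under $l$-roots, then still permits a valid $L$-hypertournament structure on $V(X')$ in which the generators are partial automorphisms. Applying the Hrushovski property to this structure produces a finite $L$-hypertournament $M'\supseteq V(X')$ in which each generator extends to an automorphism; these generate a finite group $G$, and the hypertournament axioms forbid $l$-cycles in $G$, so by the primality of each $l$ and Cauchy's theorem $|G|$ is coprime to every $l\in L$, i.e.\ $G$ is an $L^\perp$-quotient of $F_n$. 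Since the generators are now total, $M'$ is a finite $L^\perp$-cover $\hat X_n$ of $X_n$ containing $X'$, and $U:=\pi_1(\hat X_n,x_0)$ is a pro-$L^\perp$-open subgroup (its index divides $|G|$) with $H\le U$ and $g\notin U$, as $x_0\neq y_g$ remain distinct in the cover. As $g\notin H$ was arbitrary, $H$ is closed in the pro-$L^\perp$ topology.

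The main obstacle I anticipate is dictionary fact (b): constructing the $G$-invariant hypertournament structure on the cover. One must assign to each $G$-orbit of cyclically ordered $l$-tuples of distinct vertices a single orientation (a choice of $R_\sigma$ up to cyclic rotation), simultaneously extending the partial data coming from $M$ and never producing a monochromatic cycle; the coprimality of $|G|$ with $l$ guarantees that no orbit of an unoriented tuple already contains a cyclic shift of itself, which is exactly what makes a consistent orbit-by-orbit choice possible. This is the step where the primality of the elements of $L$ and the $L^\perp$-condition on the index are used essentially, generalizing the odd-order orientation argument of Herwig and Lascar for tournaments. A secondary technical point is the faithful translation between closedness in the pro-$L^\perp$ topology and embeddability of the finite graph $X$ (with a prescribed separated vertex) into a single finite $L^\perp$-cover, which rests on the Stallings-graph description of finitely generated subgroups.
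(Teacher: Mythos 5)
Your overall strategy coincides with the paper's: the same dictionary between partial automorphisms, immersed graphs and subgroups of $F_n$; the same use of primality (an element of order $l$ acting on a finite set cyclically permutes $l$ distinct points, so Cauchy's theorem forces the order of the automorphism group of the extension to be an $L^\perp$-number); and the same orbit-by-orbit extension of the relations. Your (i)$\Rightarrow$(ii) argument is sound and is only a cosmetic variant of the paper's (you work with the Stallings core plus a tail spelling $g$, the paper with a finite piece of the coset space $F_n\slash H$); your explicit remark that primality of $l$ forces the period of $x$ under $g$ to be exactly $l$ is a point the paper leaves implicit.

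The gap is in (ii)$\Rightarrow$(i), inside what you call dictionary fact (b). You reduce the problem to embedding $X$ into a finite cover of $X_n$ with monodromy group $G$ of $L^\perp$-order by separating ``the finitely many pairs of vertices and incomplete edges.'' Separating vertices guarantees that $M$ injects into the vertex set of the cover, but it does not make the structure on $M$ ``$G$-compatible'': an element $g\in G$, realized by a word whose partial action leaves $M$ and returns, can carry a tuple $\bar{y}$ of elements of $M$ with $M\models R(\bar{y})$ onto a tuple $\bar{z}$ of elements of $M$ with $M\not\models R(\bar{z})$, and then no $G$-invariant extension of $R$ restricting to the given structure exists. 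Your coprimality observation only rules out an orbit meeting its own cyclic shifts (the no-cycle half of the axioms); it says nothing about this conflict. The paper's proof spends most of its effort exactly here: for every such conflicting pair $(\bar{y},\bar{z})$ one has $w_{z_0,y_0}H_{y_0}\cap\ldots\cap w_{z_{n-1},y_{n-1}}H_{y_{n-1}}=\emptyset$ (because the partial action preserves $R$), and closedness of all the vertex groups $H_{y_i}$ in the pro-$L^\perp$ topology, together with compactness, yields a single normal subgroup $J$ of $L^\perp$-index with $w_{z_0,y_0}H_{y_0}J\cap\ldots\cap w_{z_{n-1},y_{n-1}}H_{y_{n-1}}J=\emptyset$ for all of the finitely many conflicts simultaneously; the finite quotient is then built from $K=H_{x_0}J$ rather than from an arbitrary vertex-separating cover. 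Without this step (or an equivalent one) the $G$-invariant hypertournament on the cover need not exist, so you must separate these tuple-conflicts, not just vertices and incomplete edges.
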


\begin{proof}

(i)$\Rightarrow$(ii) Let $H$ be a f.g. subgroup of $F_n$ and suppose $H$ is
closed under $l$-roots for every $l\in L$. We will show that
$H$ is closed in the pro-$L^\perp$-topology. Let $a\in
F_n$ be a word which does not belong to $H$.  Consider
$X=F_n\slash H$ and note that $F_n$ acts naturally on
$X$. We introduce a structure of an
$L$-hypertournament on $X$ as follows. 

For every $l\in L$ and a
tuple of distinct elements $\bar{x}=(x_1,\ldots,x_l)\in X^l$, consider the $F_n$-orbit
of $\bar{x}$ and note that the orbit does not contain a
cycle because $H$ is closed under $l$-roots. Indeed, if
such a cycle $\bar{x}=(w_1H,\ldots, w_lH)$ existed, then for some $g\in F_n$ we would have
$g^lw_1H=w_1H$, so $w_1^{-1}g^lw_1\in H$, and thus
$w_1^{-1}gw_1\in H$, which would mean that $w_1H=w_2H$ and
contradict the assumption that $\bar{x}$ consists of
distinct elements.

This implies
that we can choose for each $l\in L$ a
collection of orbits which does not contain a cycle and
interpret $R_l$ as tuples in this collection.

Now choose a subset $X_0$ of $X$ which contains the generators of
$H$, the word $a$ and all initial subwords of the
above. Note that the generators of $F_n$ induce partial
automorphisms of $X_0$.

By our assumption there exists a finite $L$-hypertournament
$Y$ containing $X_0$. Then $G=\mathrm{Aut}(Y)$ has no elements
of order in $L$ and we get a homomorphism $\varphi$ from $F_n$ to $G$ such
that all elements of $H$ stabilize $H\in Y$ and $a$ does not
stabilize $H$. This implies that
$\varphi(a)\notin\varphi(H)$, as needed.

(ii)$\Rightarrow$(i) Now suppose $M$ is a finite $L$-hypertournament and $P$ is a
finite set of partial automorphisms of $M$. Let $k$ be the
size of $P$. Let $G_M$ be
the immersed graph induced by $P$ on $M$. We can assume that
$G_M$ is connected (extending $P$ if needed). For $x\in M$ let
$H_x=\pi_1(G_M,x)$. Recall that we treat $H_x$ as a subgroup of $F_k$.

First note that each $H_x$ is closed under $l$-roots for every
$l\in L$. Indeed, fix $x\in M$ and suppose that $w\in F_k$ is such
that $w^l\in H_x$. 
Let $x_i=w^i(x)$ with
$x_0=x$. Since $M$ is an $L$-hypertournament, there is a permutation
$\sigma$ such that $M\models (R_l)_\sigma(x_0,\ldots, x_{l-1})$. Then $w$
induces a cycle $\bar{x}_0,\ldots,\bar{x}_{l-1}$ starting with
$\bar{x}_0=(x_0,\ldots,x_{l-1})$ such that
$M\models (R_l)_\sigma(\bar{x}_i)$ for all $i< l$, which contradicts
the assumption that $M$ is an $L$-hypertournament. Thus, by our assumption, every $H_x$ is closed in the
pro-$L^\perp$ topology. 

Choose $x_0\in M$. For each $x\in M$ choose a word $w_x$
such that $w_x(x_0)=x$. For two elements $y,z\in M$ write $w_{z,y}=w_zw_y^{-1}$.

For each pair of tuples of distinct
elements $\bar{y}=(y_0,\ldots,y_{n-1})$ and
$\bar{z}=(z_0,\ldots,z_{n-1})$ in $M$ such that $M\models
R_n(\bar{y})$ but $M\not\models R_n(\bar{z})$ 
note
that $$w_{z_0,y_0}H_{y_0}\cap\ldots\cap w_{z_{n-1},y_{n-1}}H_{y_{n-1}}=\emptyset.$$

Since all $H_{y_i}$ are closed in the pro-$L^\perp$ topology
and the latter is compact Hausdorff, there exists a basic
open neighborhood of $1$, i.e. a normal subgroup $J$ whose
index is finite and not divisible by any number in $L$ such that
\begin{equation}\label{star}
w_{z_0,y_0}H_{y_0}J\cap\ldots\cap w_{z_{n-1},y_{n-1}}H_{y_{n-1}}J=\emptyset.\tag{$\ast$}
\end{equation}
for all tuples $\bar{y}$ and $\bar{z}$ as above 
and $w_x^{-1}w_y\notin H_{x_0}J$ for any $x\not=y$.

Now consider the subgroup $K=H_{x_0}J$ and let $N$ be the set
$F_k\slash K$. Since for each $x\in M$ we chose a word $w_x$
such that $w_x(x_0)=x$, we can map $x\mapsto w_xK$ and since
$K$ does not contain any $w_x^{-1}w_y$ for $x\not=y$, this
is an embedding. We need to introduce a structure of an
$L$-hypertournament on $N$ such that $M$ is a
substructure.

We have the natural action of $F_k$ on
$F_k\slash K$ and write $j:F_k\to\mathrm{Sym}(N)$. Note that
the kernel of $j$ contains $J$, so the index of the kernel
of $j$ is not divisible by any number in $L$. Write $G=F_k\slash\mathrm{ker}(j)$
and note that $G$ is a finite group without elements of
order divisible by a number in $L$, thus its rank is only
divisible by numbers in $L^\perp$.

Now we can extend the structure from $M$ to $N$ as
follows. First, for every tuple $\bar{y}=(y_0,\ldots,y_{n-1})$ in $M$
such that $M\models R(y_0,\ldots,y_{n-1})$ extend $R$ to all
tuples in the $G$-orbit of $\bar{y}$. Note that (\ref{star}) implies
that this does not change the structure on $M$. Indeed,
otherwise there is a $\bar{z}=(z_0,\ldots,z_{n-1})$ such
that $M\models\neg R(z_0,\ldots,z_{n-1})$. Suppose $g\in
F_k$ is such that $g$ maps $w_{y_i}K$ to $w_{z_i}K$, for all
$i<n$. Then $gw_{y_i}K=w_{z_i}K$ for all $i<n$. Since
$K=H_{x_0}J$ we have $w_{y_i}'\in w_{y_i}H_{x_0}$ and
$w_{z_i}'\in w_{z_i}H_{x_0}$ such that $$g\in
w_{z_i}'J(w_{y_i}')^{-1}=w_{z_i}'(w_{y_i}')^{-1}J$$ since
$J$ is normal. Now $w_{z_i}'(w_{y_i}')^{-1}\in
w_{z_i,y_i}H_{y_i}$, so we get a contradiction with (\ref{star}).

Moreover, since $G$ does not have elements of order
divisible by a number in $L$, this does not introduce cycles
of order in $L$. Next, to get a hypertournament, we extend
the structure to all remaining tuples in $N$ in the same
way, i.e. if $\bar{x}=(x_0,\ldots,x_{n-1})$ is a tuple such that
$R_n$ does not hold on any permutation of $\bar{x}$, then we
extend $R_n$ to the $G$-orbit of $\bar{x}$. Again, since $G$
does not contain elements of order in $L$, this defines a
hypertournament.

\end{proof}

\begin{definition}
  Suppose that $G$ is an immersed graph. We say that $G$ is
  a \textit{subtadpole} if $G$ has at most one vertex of
  degree $3$ at all other vertices have degree at most $2$.
\end{definition}

Note that a connected subtadpole graph looks like
a tadpole or a cycle or a tripod or a line. If a subtadpole graph is
disconnected, then at most one of its connected components
is a tadpole or a tripod and the remaining ones are cycles or lines.

Given a family $P$ of $n$ partial bijections of a set $X$, we say that
$P$ \textit{forms a subtadpole} if the corresponding immersed graph is
a subtadpole.

The following proposition is written in the spirit of the
previous lemma and gives equivalent conditions to the fact
that (ii) in the lemma holds only for cyclic groups.

\begin{proposition}\label{propozycja}
  Let $L$ be a nontrivial set of prime numbers. The following are
 equivalent:
 \begin{itemize}
\item[(i)] The class of finite $L$-hypertournaments has the
   Hrushovski property for families of partial isomorphisms
   which form subtadpoles.
 \item[(ii)] Every cyclic subgroup of $F_n$ which is
   closed under $l$-roots for all $l\in L$ is closed in the
   pro-$L^\perp$ topology.
 
\end{itemize}
\end{proposition}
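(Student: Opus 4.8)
The plan is to prove the two implications in parallel with Lemma~\ref{rownowaznosc}, exploiting the fact that a family of partial bijections forms a subtadpole precisely when the associated immersed graph has cyclic (possibly trivial) fundamental group on each connected component. Indeed, a connected subtadpole is a cycle, a tadpole, a line or a tripod, and in every case $\pi_1$ is trivial or infinite cyclic. Thus the role played by arbitrary finitely generated subgroups in Lemma~\ref{rownowaznosc} is taken over here by cyclic subgroups, and at each step I would only need to check that the graphs arising are subtadpoles and the groups arising are cyclic.

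For (i)$\Rightarrow$(ii) I would take a cyclic $C\le F_n$ that is closed under $l$-roots for all $l\in L$ and a word $a\notin C$, and separate $a$ from $C$ in the pro-$L^\perp$ topology. First I would observe that whether a finite normal quotient separates $a$ from $C$ depends only on the coset $aC$ and is invariant under simultaneously conjugating $C$ and $a$; since conjugation preserves closure under $l$-roots, I may assume the generator of $C$ is cyclically reduced, so that by Theorem~\ref{thm:stallings} the Stallings core of $C$ is a single cycle with no tail. As in Lemma~\ref{rownowaznosc} I would put a hypertournament structure on $X=F_n/C$, closure under $l$-roots guaranteeing that no $F_n$-orbit of a tuple of distinct cosets contains a cycle. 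Then I would let $X_0$ be the vertex set of this core cycle together with a geodesic in $X$ from the basepoint $C$ to the vertex $aC$; because the coset graph is the core with trees hanging off its vertices, the geodesic attaches to the cycle along a single arc and runs off into a tree, so the induced immersed graph on $X_0$ has no chords and is a tadpole, hence a subtadpole, with $C$ contained in $\pi_1$ based at $C$ and $aC\neq C$. Applying hypothesis (i) to the $n$ partial automorphisms induced by the generators of $F_n$ yields a finite hypertournament $Y\supseteq X_0$ in which they extend to automorphisms; then $\mathrm{Aut}(Y)$ has no element of order $l\in L$, so the resulting finite quotient of $F_n$ has order coprime to $L$, is stabilized at the image of $C$ by all of $C$, and is moved there by $a$, giving the required separation.

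For (ii)$\Rightarrow$(i) I would run the construction of Lemma~\ref{rownowaznosc} almost verbatim. Given a subtadpole family $P$ on a finite hypertournament $M$, the immersed graph $G_M$ is a subtadpole, and the same computation as in the lemma shows that the fundamental group of each connected component is closed under $l$-roots for every $l\in L$; since each such group is cyclic, hypothesis (ii) makes it closed in the pro-$L^\perp$ topology. The genuinely new feature is that $G_M$ need not be connected, and in the main application (pairwise disjoint domains and ranges) every vertex has degree at most two, so $G_M$ is a disjoint union of cycles and lines and cannot be made connected inside the subtadpole class. I would therefore choose a basepoint $x_j$ in each component $M_j$, with $H_j=\pi_1(M_j,x_j)$ cyclic and closed, and build the extension as the disjoint union $N=\bigsqcup_j F_k/(H_jJ)$ for a single finite-index normal $J$ of index coprime to $L$, obtained by compactness so as to realize simultaneously all of the finitely many emptiness-of-intersection conditions analogous to $(\ref{star})$, one for each component.

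The hard part will be exactly this disconnectedness: ensuring that extending the relations orbit by orbit from $M$ to $N$ does not disturb the structure already present on $M$. I expect it to be manageable because $F_k$ acts on $N=\bigsqcup_j F_k/(H_jJ)$ through the product of its actions on the pieces, so each piece $F_k/(H_jJ)$ is preserved setwise. Consequently two tuples of $M$ lying over different patterns of components can never share an orbit and are automatically separated, and the only tuples for which a condition of the form $(\ref{star})$ is needed are those whose entries match up component by component; for these the transport words $w_{z_i,y_i}$ are defined and the argument of Lemma~\ref{rownowaznosc} applies within each component, exactly as the closedness of the cyclic $H_j$ requires. The remaining delicate point, in the other direction, is the verification that the core-plus-geodesic graph in (i)$\Rightarrow$(ii) is genuinely a subtadpole, which is precisely where I use the cyclicity of $C$, the reduction to a cyclically reduced generator, and the freedom to replace $a$ by any representative of $aC$.
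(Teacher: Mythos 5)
Your (ii)$\Rightarrow$(i) follows the paper's route: the paper simply defers this implication to the argument of Lemma~\ref{rownowaznosc} together with the observation that a connected subtadpole has cyclic fundamental group, and you flesh out the one point it glosses over, namely that a subtadpole cannot in general be made connected within the class (connecting the components would destroy cyclicity of $\pi_1$), so the construction must be run component by component; your remark that the $F_k$-action on $\bigsqcup_j F_k/(H_jJ)$ preserves each piece, so that only component-matching tuples require a separation condition and any cycle in an orbit stays inside one piece, is exactly the right repair. The genuine divergence is in (i)$\Rightarrow$(ii). The paper sidesteps your ``no chords'' issue entirely: it takes $k=|a|$ disjoint copies of $F_n/C$, places the cycle spelling $c$ in the first copy and threads the tail for $a$ through successive copies, so the immersed graph is a subtadpole by fiat; the price is having to extend the hypertournament structure to orbits meeting several copies (using that such orbits cannot contain cycles). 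You instead stay inside a single copy of $F_n/C$, take the Stallings core (an embedded cycle once $c$ is conjugated to be cyclically reduced, which is legitimate since closure under $l$-roots and separability are conjugation-invariant) together with a geodesic to $aC$, and argue the induced graph is a tadpole. That argument does close: a chord would produce a connected subgraph of the Schreier graph of $C$ with two independent loops, contradicting $\pi_1(F_n/C)\cong\mathbb{Z}$; and the $F_n$-invariance of the structure on $F_n/C$ makes the restricted generators partial isomorphisms with no further extension step, after which both constructions conclude identically ($\mathrm{Aut}(Y)$ has no elements of order in $L$, all of $C$ fixes the basepoint, $a$ moves it). Your version buys a cleaner structure-extension step at the cost of the chord-freeness verification, which you correctly identify as the delicate point and which does hold.
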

\begin{proof}
  (i)$\Rightarrow$(ii) Let $C$ be a cyclic subgroup of $F_n$
  and $c\in C$
  be its generator. Assume that $C$ is closed under $l$-roots for all
  $l\in L$.  To show that $C$ is closed in the pro-$L^\perp$ topology,
  choose $a\notin C$. We may assume that $a$ does not contain $c^i$ as
  an initial subword for any $i>0$, for otherwise if $a=a'c^i$, then
  we can replace $a$ with $a'$ (note that we are reading words from right to left as we are
  talking about left actions).

  Consider first the set $X_1=F_n\slash C$ and as in the previous
  proposition introduce a structure of an $F_n$-invariant $L$-hypertournament on
  $X_1$, using the assumption that $C$ is closed under $l$-roots for
  all $l\in L$. Let $k$ be the length of $a$ and let
  $X=X_1\cup\ldots\cup X_k$ be a union of $k$ disjoint copies of
  $X_1$. Let $b$ be the longest common initial subword of
  $c$ and $a$ and let
  $a=a'b$. By our assumption on $a$, we have that $b$ does not contain
  $c$ as an initial subword. Let $x_0=bC$. For each $i<k$ let $x_i$ be the copy
  of $x_0$ in $X_i$.

  We define a family of partial bijections of $X$. First, if $c_i$ is
  the $i$-th initial subword of $c$, then let the partial bijection
  corresponding to the $i$-th letter of $c$ map $c_{i-1}C$ to $c_i C$.
  Next, for each $i<k$ if the $i$-th letter of $a'$ is $a_i$, then let
  $a_i$ map $x_i$ to $x_{i+1}$. Note that these partial bijections
  form a subtadpole on $X$. Now we define a structure of an
  $L$-hypertournament on $X$ so that all these partial bijections are
  partial isomorphisms.

  Note that the partial action of $F_n$ on $X$ induces also a partial
  action on its $l$-element subsets for each $l\in L$. In order for
  the partial maps to be partial isomorphisms, we need that the
  $L$-hypergraph structure is invariant under this partial action. Note that if an
  orbit of an $l$-tuple is entirely contained in $X_1$, then by our
  construction of the structure on $X_1$, the hypergraph relations are invariant
  on such orbits. If an orbit is not entirely contained in $X_1$ but
  did contain a cycle, then the cycle would have to be
  contained in $X_1$ (by the subtadpole structure), which is
  impossible. Thus, we can carry over the relations from $X_1$ to such orbits
  without creating cycles. Finally, all other orbits contain only
  single subsets, so we can extend the definition of the relations on
  those orbits arbitrarily not creating cycles.

  Finally, find a finite $L$-hypertournament $X'$ of $X$ which is
  closed under all subwords of $c$ and under $a$. By our assumption
  $X'$ can be extended to a finite $L$-hypertournament $X''$ such that
  all our partial isomorphisms extend to automorhpisms of $X''$. This
  defines a homomorphisms of $F_n$ to the automorphism group of
  $X''$. Since $X''$ is an $L$-hypertournament, its automorphism group
  does not contain elements of order $l$ for any $l\in L$
  and thus its rank is divisible only by numbers in
  $L^\perp$. To end the
  proof, observe that all elements of $C$ stabilize $x_0$, while $a$
  does not, so this homomorphism does not map $a$ to the
  image of $C$.

  (ii)$\Rightarrow$(i) This implication is proved exactly as
  in the previous proposition with the observation that the
  fundamental group of a subtadpole is cyclic.

\end{proof}

\section{A counterexample}

In this section we prove Theorem \ref{counterexample} and Theorem \ref{negative}.

Let $f:A\to X$ be the immersion of graphs indicated in
Figure~\ref{f:1}, where the immersion map preserves
orientation and labeling of edges. Note that the image of
$f_\ast:\pi_1 A\to \pi_1 X$ is the subgroup generated by
$aba^{-1}b^{-1}a$ and $b$ inside $F_2=\langle a,b\rangle$.

\begin{figure}[h!]
	\centering
	\includegraphics[width=1\textwidth]{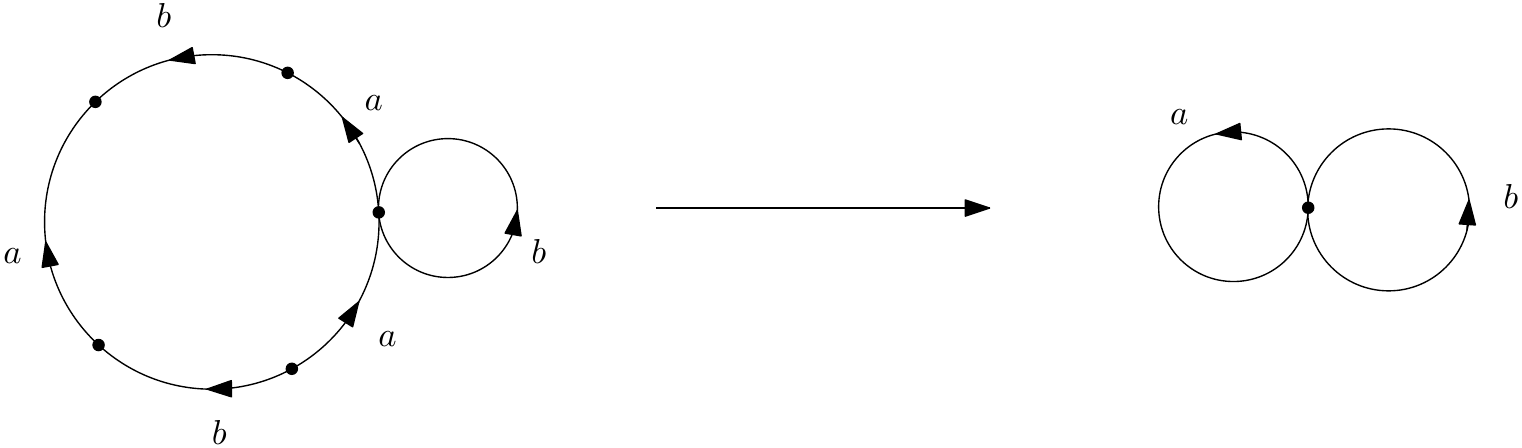}
	\caption{The immersion $f:A\to X$.}
	\label{f:1}
\end{figure}

\begin{figure}\label{fig:nondiag}
	\centering
	\includegraphics[width=1\textwidth]{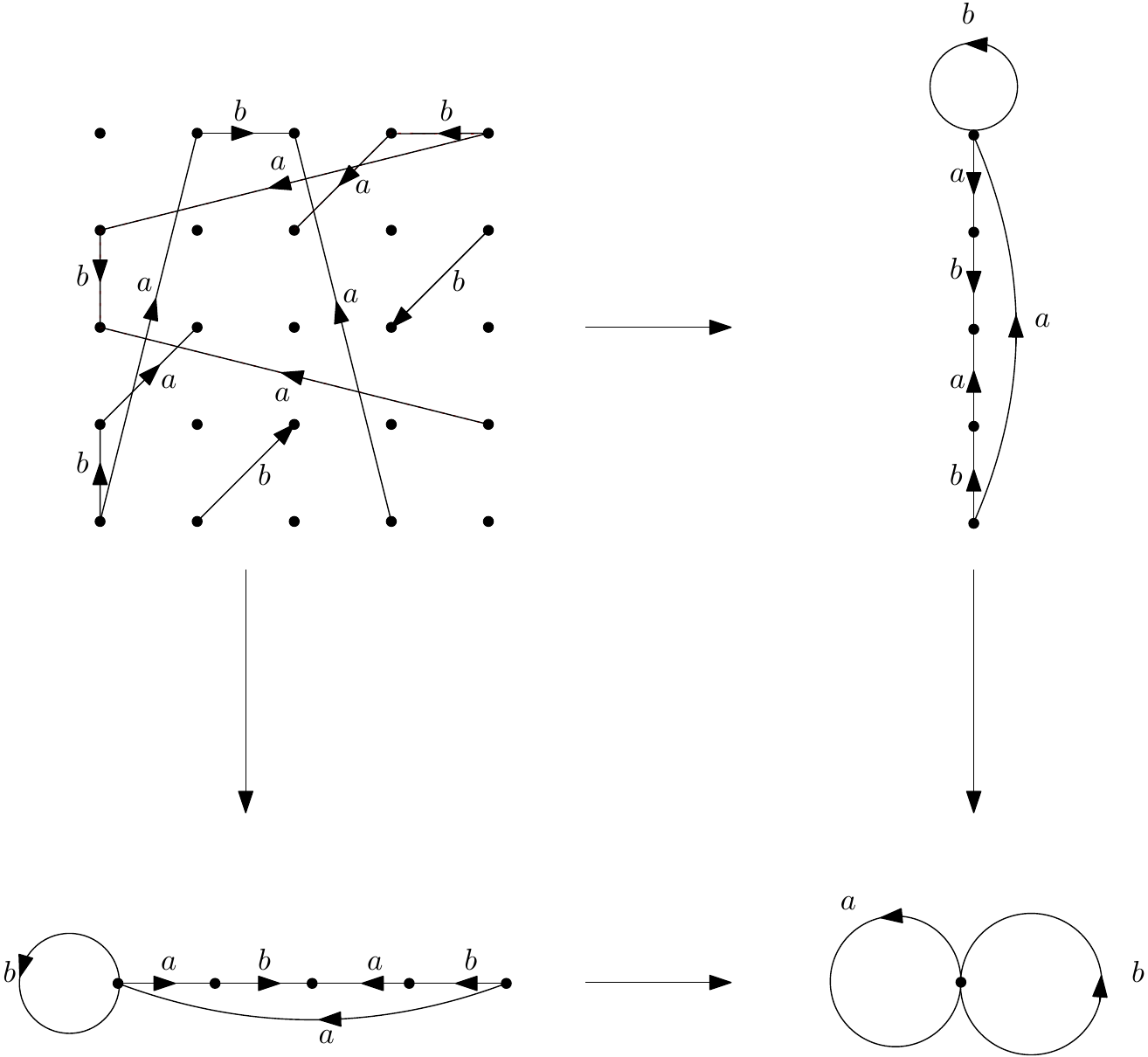}
	\caption{Only non-diagonal components of the fiber product are drawn.}
	\label{f:2}
\end{figure}

\begin{proof}[Proof of Theorem \ref{counterexample}]
  Write $G=\pi_1(A)$ for $A$ as above. 
  \begin{lem}
    $G$ is malnormal in $F_2$.
  \end{lem}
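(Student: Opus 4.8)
The plan is to invoke the malnormality criterion of Corollary~\ref{cor:malnormal criterion}. Since $f:A\to X$ is an immersion between connected graphs and $\pi_1 X=F_2$, that corollary reduces the claim to a purely combinatorial statement about the fiber product: namely, $G=\pi_1 A$ is malnormal in $F_2$ if and only if every non-diagonal component of $A\otimes_X A$ is a tree. So the whole proof becomes an analysis of $A\otimes_X A$, with no further group theory required beyond what is already packaged in the corollary.

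First I would describe $A\otimes_X A$ concretely from its definition. Because $X$ is the wedge of two circles and therefore has a single vertex, every ordered pair $(v,w)$ of vertices of $A$ is a vertex of the fiber product, and the edges are exactly the pairs of edges of $A$ carrying the same label (with matching orientation, under the convention that an $a_i$-edge reversed is an $a_i^{-1}$-edge). The canonical embedding $v\mapsto(v,v)$ cuts out the diagonal component, which is a copy of $A$ and carries no information relevant to the criterion; everything of interest lives in the components meeting the off-diagonal vertices $(v,w)$ with $v\neq w$. These non-diagonal components are the ones drawn in Figure~\ref{f:2}, so the task is to read them off and check their topology.

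The core of the argument, and the main obstacle, is then a finite but careful inspection: one must verify that each non-diagonal component of $A\otimes_X A$ contains no reduced cycle, i.e. is simply connected. By part~(2) of Lemma~\ref{lem:components of tensor product}, the fundamental group of such a component is $G\cap gGg^{-1}$ for a suitable $g\in F_2\setminus G$, so this inspection is exactly a verification that all these conjugate intersections are trivial --- which is the content of malnormality. Concretely, I would start from each off-diagonal vertex $(v,w)$, follow the label-respecting edges out of it, and confirm that the component one sweeps out closes up into no loop, so that it is a finite tree (or a line). Once every non-diagonal component is confirmed to be a tree, Corollary~\ref{cor:malnormal criterion} immediately yields that $G$ is malnormal in $F_2$, completing the lemma. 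The delicate point to get right is the bookkeeping of how the two copies of the edges of $A$ with a common label pair up, since a single stray edge closing a loop in a non-diagonal component would destroy malnormality.
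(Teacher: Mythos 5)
Your proposal follows exactly the paper's argument: apply Corollary~\ref{cor:malnormal criterion} to reduce malnormality of $G=\pi_1 A$ to the statement that every non-diagonal component of $A\otimes_X A$ is a tree, and then verify this by a direct finite inspection of the fiber product (the paper records the count: $25$ vertices, $9$ $a$-edges and $9$ $b$-edges, of which $6$ of each are non-diagonal, as drawn in Figure~\ref{f:2}). The approach and the reduction are the same; the only difference is that the paper actually exhibits the non-diagonal components in the figure, whereas you describe the procedure without carrying out the enumeration.
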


  \begin{proof}
    By Corollary~\ref{cor:malnormal criterion}, it suffices
    to show each non-diagonal component of $A \otimes_X A$
    is a tree.

    A direct computation as in Figure~\ref{f:2} implies that
    this is indeed true (note that $A$ has 5 vertices, 3
    $a$-edges and 3 $b$-edges, hence $A \otimes_X A$ has 25
    vertices, 9 $a$-edges and 9 $b$-edges, moreover, only 6
    $a$-edges and 6 $b$-edges of $A \otimes_X A$ are
    non-diagonal and they are drawn in Figure~\ref{f:2}).

  \end{proof}

  Now we show that $G$ is not closed in any pro-$p$ topology. Fix $p$
  and suppose $G$ is closed in the pro-$p$ topology. Since $G$ is
  malnormal, it is closed under $q$-th roots for all prime
  $q\not=p$, by Claim \ref{malnormalclaim},
  and we can introduce a structure of an $\{p\}^\perp$-hypertnornament
  on $A$. We do this the same way as in the proof of Lemma
  \ref{rownowaznosc}: since $A$ has 5 vertices, we need to define
  relations of arity $q=2, 3, 4, 5$. For each such $q$ and each
  $q$-tuple in $A$ choose one permutation of the tuple to include into
  the structure. Next, pass through the elements of $G$ to extend the
  structure so that $G$ acts by partial isomorphisms. The fact that
  $G$ is closed under $q$-th roots implies that the structure is an
  $\{p\}^\perp$-hypertournament. Abusing notation a bit, let us refer
  to this hypertournament also by $A$.

   If $G$ is closed in the pro-$p$ topology, then by Lemma
   \ref{rownowaznosc} (ii)$\Rightarrow$(i) (this implication uses only
   the
   assumption that the groups $H_x$ are closed in the pro-$p$ topology
   and they are all conjugates of $G$), there is a finite
   $\{p\}^\perp$-hypertournament $A'$ extending the one on
   $A$ such that both partial maps on $A$ (corresponding to $a$ and
   $b$) extend to
   isomorphisms of $A'$. This makes $A'$ a covering of
   $X$ (the wedge of two circles). Write $\pi_{A'}:A'\to X$ for the covering map. Note
   that $\pi_1(A')$ is closed under $q$-th roots for all
   prime $q\not=p$ since $A'$ is a $\{p\}^\perp$-hypertournament.

  Now, there is a further finite cover $\pi_B:B\to A'$ such that
  $B$ is a regular connected cover of $X$ (i.e. $B$ corresponds to
  the biggest normal subgroup of $F_2$ contained in the
  fundamental group of $A'$). Note since $A'$ is nontrivial,
  $B$ has degree bigger than $1$. Write $H$ for
  $\pi_1(B)$ and note that $H$ is the intersection of
  finitely many conjugates of $\pi_1(A')$, hence it is
  closed under $q$-th roots for all prime $q\not= p$. As $H$
  is normal in $F_2$, the quotient $F_2\slash H$ is a
  $p$-group and there exists a subnormal series
  $$F_2\triangleright\ldots H_1\triangleright H_0=H$$ such that each
  $H_{i+1}\slash H_i\simeq \Z\slash p\Z$. This corresponds to
  a sequence of intermediate
  subcovers $$X=X_0\leftarrow\ldots\leftarrow
  B$$ such that each $X_{i+1}\to X_i$ is a regular $\Z\slash
  p\Z$ cover. 

  Now, note that $A$ is connected and the map
  $$f_*:H_1(A,\Z\slash p\Z) \to H_1(X,\Z\slash p\Z)$$ ils an
  isomorphism because the generators of the first homology of $A$ are
  mapped to the generators of the first homology of $X$.  Inductively using Lemma \ref{spojne} and Corollary \ref{isomorfizm}, we see that each
  $A\otimes_X X_i$ is connected and the map from $H_1(A\otimes
  _X X_i,\Z\slash p\Z)$ to $H_1(X_i,\Z\slash p\Z)$ is an
  isomorphism. In particular, $A\otimes_X B$ is connected.

   On the other hand, note that since $A'$ extends $A$, the
   map $f:A\to X$ lifts to an embedding
   $f':A\to A'$. Thus, by Corollary \ref{disconnected} $A\otimes_X A'$ is
   disconnected. Thus, $(A\otimes_X A')\otimes_{A'} B$ is
   disconnected as well. But, again, by covariance of the pull-back,
   the latter is homeomorphic to $A\otimes_X B$. This
   contradicts the previous paragraph and ends the proof.

\end{proof}

\section{Cyclic subgroups}
\label{sec:cyclic-groups}

Finally, in this section we prove Theorem \ref{lem:PI} and Theorem \ref{cyclic}.

Note that every cyclic subgroup of $F_n$ is contained in a maximal cyclic
group. Indeed, if $C<F_n$ is cyclic and generated by $c$, then a maximal
cyclic subgroup of $F_n$ containing $C$ can be found by finding $d\in
F_n$ of minimal length such that $d^k=c$ for some positive integer $k$.

\begin{lemma}
	\label{lem:p}
        If $A\subset F_n$ is a maximal cyclic subgroup, then
        $A$ is closed in the pro-$p$ topology for any prime
        $p$.
\end{lemma}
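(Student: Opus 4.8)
The plan is to prove that the closure $\overline{A}$ of $A$ in the pro-$p$ topology coincides with $A$. Since the normal subgroups of $p$-power index form a neighbourhood basis of the identity, the closure can be written as $\overline{A}=\bigcap_N AN$, where $N$ ranges over all normal subgroups of $F_n$ of $p$-power index. The inclusion $A\subseteq\overline{A}$ is automatic, so the whole content lies in establishing the reverse inclusion.

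Write $A=\langle c\rangle$. If $c=1$ the statement is precisely the residual $p$-ness of $F_n$ already recorded above, so I would assume $c\neq 1$. The key algebraic input I would invoke is the classical description of centralizers in a free group: any two commuting elements lie in a common cyclic subgroup, so the centralizer $C_{F_n}(c)$ is exactly the (unique) maximal cyclic subgroup containing $c$. Since $A=\langle c\rangle$ is itself maximal cyclic, this pins the centralizer down to $C_{F_n}(c)=A$.

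Now I would take an arbitrary $g\in\overline{A}$ and show that $g$ commutes with $c$. For each normal $N$ of $p$-power index we have $g\in AN$, so $g=c^m n$ for some integer $m$ and some $n\in N$; passing to the finite $p$-group $F_n/N$, the image of $g$ is a power of the image of $c$ and therefore commutes with it, which means $[g,c]\in N$. As this holds for every such $N$, and $\bigcap_N N=\{1\}$ because $F_n$ is residually $p$, we conclude that $[g,c]=1$. Hence $g\in C_{F_n}(c)=A$, which gives $\overline{A}\subseteq A$ and completes the argument.

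The only nonroutine ingredient is the centralizer computation $C_{F_n}(c)=A$: I would either cite the standard fact that commuting elements of a free group are powers of a common element, or derive it directly and then combine it with the maximality of $A$ to identify the centralizer with $A$ itself. Everything else is the standard description of the pro-$p$ closure as $\bigcap_N AN$ together with the residual $p$-ness of $F_n$, so I expect the centralizer step to be the main point requiring care.
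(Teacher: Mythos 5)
Your proof is correct and is essentially the paper's argument: both rest on the identification $C_{F_n}(c)=A$ (via maximality and the fact that commuting elements of a free group share a cyclic overgroup) and then use residual $p$-ness to force the commutator $[g,c]$ to be trivial. The only difference is presentational — you compute the closure $\bigcap_N AN$ directly, while the paper separates a given $x\notin A$ by killing $[x,a]$ in a finite $p$-quotient — so there is nothing substantive to change.
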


\begin{proof}
  Suppose $A=\langle a\rangle$. Then the maximality of $A$
  implies that $A$ is the centralizer of $a$ in $F_n$. Pick
  $x\notin A$ and let $g=[x,a]$. By the fact $F_n$ is
  residually $p$,
  there exists a finite $p$-group $F$ and a homomorphism
  $\phi:F_n\to F$ such that $\phi(g)$ is non-trivial. Thus
  $\phi(x)$ does not commute with $\phi(a)$. Since $\phi(A)$
  is a cyclic subgroup generated by $\phi(a)$, we get that
  $\phi(x)\notin \phi(A)$.
\end{proof}

Now we prove Theorem \ref{lem:PI}.

\begin{proof}[Proof of Theorem \ref{lem:PI}]
  Let $A$ be a maximal cyclic subgroup containing
  $C$. Suppose $i=[A:C]$ and let $g\notin C$. We will
  separate $g$ from $C$.

  \emph{Case 1:} If $g\notin A$, by Lemma~\ref{lem:p}, we
  can find a $p$-group $F$ for $p\in L^\perp$ and a homomorphism
  $\phi:G\to F$ such that $\phi(g)\notin \phi(A)$,
  hence $\phi(g)\notin \phi(C)$.

  \emph{Case 2:} If $g\in A\setminus C$, we let
  $A=\langle a\rangle$ and $C=\langle a^i\rangle$. Then
  $g=a^{i j+m}$ for $j\in\mathbb Z$ and $1\le
  m<i$.
  Let $p$ be a prime factor of $i$. We claim $p\in
  L^\perp$.
  Indeed, otherwise there is $l\in L$ such that
  $p\mid l$. Suppose $l=pr_1$ and $i=pr_2$. Let
  $q=\langle i,l\rangle$ be the least common multiple of
  $i$ and $l$. Suppose $q=l r$. Now we consider the
  element $a^r$. It is clear that $(a^r)^l=a^q\in
  C$.
  However,
  $$r=\frac{q}{l}=\frac{\langle
    i,l\rangle}{l}=\frac{p\langle
    r_1,r_2\rangle}{pr_1}=\frac{\langle
    r_1,r_2\rangle}{r_1}\le r_2<i.$$
  Thus $a^r\notin C$. This contradicts that $C$ is closed
  under $l$-roots.

  Since $F_n$ is residually $p$, there is a finite $p$-group
  $F$ and a homomorphism $\phi:G\to F$ such that
  $\bar a=\phi(a)$ is non-trivial. We claim
  $\phi(g)\notin \phi(C)$. Indeed, otherwise there
  is an integer $s$ such that
  $\phi(g)=\bar a^{i j+m}=\bar a^{i s}$. Hence
  $\bar a^{i(j-s)+m}$ is trivial. Since $\bar a$ is a
  non-trivial element in a $p$-group, it has order equal to
  a power of $p$. In particular $p|i(j-s)+m$. Since
  $p|i$ by construction and $p\nmid m$, we reach a
  contradiction. Thus $\phi(g)\notin \phi(C)$
  and we are done.
\end{proof}

%



\begin{proof}[Proof of Theorem \ref{cyclic}]
  Note that if $\varphi_1,\ldots,\varphi_n$ have pairiwse
  disjoint domains and pairwise disjoint ranges, then every
  vertex in the
  immersed graph induced by them has degree at most $2$ and
  in particular is a subtadpole (in fact, it is a union of
  circles and lines). Thus, the corollary follows from
  Theorem \ref{lem:PI} and Proposition \ref{propozycja}.
\end{proof}

The case $L=\{2\}$ in the above corollary corresponds to the
case considered by Herwig and Lascar in \cite{hl}.


\bibliographystyle{plain}
\bibliography{refs}

\end{document}